\newtheorem{proposition}{Proposition}
\newtheorem{theorem}{Theorem}
\newtheorem{definition}{Definition}
\newtheorem{lemma}{Lemma}
\newtheorem{corollary}{Corollary}
\theoremstyle{definition}
\newtheorem{assumption}{Assumption}
\newtheorem{remark}{Remark}
\title{Distributed Constrained Online Learning} 
\author{Santiago Paternain$^\dagger$, Soomin Lee$^*$, Michael M. Zavlanos$^{\S}$ and Alejandro Ribeiro$^\dagger$
  \thanks{Work supported by ARL DCIST CRA W911NF-17-2-0181.

    $^\dagger$  Dept. of Electrical and System Engineering, Univ. of Pennsylvania. Email: \{spater, aribeiro\} @seas.upenn.edu.

    $^*$ Yahoo! Research. Email: soominl@yahoo-inc.com 

    $^\S$Department of Mechanical Engeeniring and Material Science, Duke University. Email:michael.zavlanos@duke.edu
}
}
\begin{document}

\maketitle
\thispagestyle{empty}
\pagestyle{empty}


%
\begin{abstract}

In this paper, we consider groups of agents in a network that select actions in order to satisfy a set of constraints that vary arbitrarily over time and minimize a time varying function of which they have only local observations. The selection of actions, also called a strategy, is causal and decentralized, i.e., the dynamical system that determines the actions of a given agent depends only on the constraints at the current time and on its own actions and those of its neighbors. To determine such a strategy, we propose a decentralized saddle point algorithm and show that the corresponding global fit and regret are bounded by functions of the order of $\sqrt{T}$. Specifically, we define the global fit of a strategy as a vector that integrates over time the global constraint violations as seen by a given node. The fit is a performance loss associated with online operation as opposed to offline clairvoyant operation which can always select an action, if one exists, that satisfies the constraints at all times. If this fit grows sublinearly with the time horizon it suggests that the strategy approaches the feasible set of actions. Likewise, we define the regret of a strategy as the difference between its accumulated cost and that of the best fixed action that one could select knowing beforehand the time evolution of the objective function. Numerical examples support the theoretical conclusions.
\end{abstract}
%
%

\section{Introduction}
Distributed optimization has applications in several engineering problems such as source localization \cite{rabbat2004distributed}, resource allocation problems in multi cellular communication networks \cite{shen2012distributed}, machine learning \cite{john2010elements,cavalcante2009adaptive}, multi-robot teams \cite{koppel2015d4l,koppel2018decentralized,freundlich2018distributed} and the internet of things \cite{ghosh2015pricing,gatsis2017wireless}.  In these problems agents try to optimize collectively a common objective function that is separable in local objectives. In cases where a centralized solution of such problems is acceptable and the objectives and constraints are convex, the problem reduces to solve a classic convex optimization problem. Several methods to do so exist, notably the saddle point algorithm by Arrow and Hurwicz \cite{arrow_hurwicz}, which has the advantage of admitting a distributed implementation \cite{zhu2012distributed,yuan2011distributed,chang2014distributed,feijer2010stability}. In this framework the most studied classes of problems are when the constraints and the objective function are constant with respect of the time and when their variation is according to a stationary probability distribution. Solutions to the former problem have been established \cite{zhu2012distributed,yuan2011distributed,chang2014distributed,feijer2010stability}. In the latter, the problem that is studied is that of selecting actions that minimize the expectation of the objective while satisfying the constraints in average. When the problem is unconstrained, centralized and decentralized implementations of stochastic gradient descent converge to such solutions \cite{robbins1951stochastic,schmidt2017minimizing}.

In this paper we consider online formulations in which the cost and constraints can vary arbitrarily over time, even strategically. In this case, cost minimization can be formulated in the language of regret \cite{blackwell1956analog,Zinkevich03,vapnik1995nature} whereby agents select online actions that result in a cost chosen by nature. The cost functions are revealed to the agents after the action are selected and these values are used to adapt the future strategy. Regret is defined as the difference between the total cost incurred by each agent and the cost of the optimal fixed solution that a clairvoyant agent could select. In that sense, it measures the effect of not knowing the temporal evolution of the cost function and, therefore, it can be used as a performance measure for online operation.  Likewise, the fit of a strategy \cite{PaternainRibeiro16,chen2017online} is a vector that integrates over time the constraint violation incurred by each agent. In that sense the fit is a performance loss associated with online operation as opposed to offline clairvoyant operation which can always select an action that satisfies the constraints at all times, if such action exists. It is a remarkable fact that online versions of the Arrow-Hurwicz algorithm for centralized problems achieve regret bounded by a constant and fit bounded by a function that grows at a sublinear rate \cite{PaternainRibeiro16,chen2017online,mahdavi2012trading}; this suggests vanishing per play penalties and constraint violation of online plays with respect to clairvoyant agent. Since the fit compares the accumulation of the constraints it is possible to achieve small fit by alternating periods in which the constraints are satisfied with slack and periods in which the constraints are violated. This notion is appropriated for constraints that have a cumulative nature. However, when this is not the case it is possible to work with the notion of saturated fit, where only violations of the constraints are accumulated. A trajectory with small saturated fit is one in which the constraints are violated by a significant amount only for a short period of time.

The problem of distributed online constrained convex optimization has been studied in \cite{hosseini2013online,lee2016distributed2,lee2016distributed}.  Continuous time approaches have advantages in the context of distributed control systems whenever signals are inherently continuous. Moreover, discrete-time approaches can be characterized as the discretization of continuous-time dynamics.  The work in \cite{hosseini2013online,lee2016distributed2} considers unconstrained settings where agents minimize a time-varying convex loss, whereas here we consider constrained problems. These have been considered in \cite{lee2016distributed} using a Saddle Point algorithm establishing sublinear bounds on the network disagreement and on the regret achieved by the strategy. The main difference with \cite{lee2016distributed} is that instead of imposing exact consensus among agents, here we allow for small disagreement. This idea has been used for unconstrained problems in\cite{koppel2016proximity}. With this modification we are able to establish that the trajectories that arise from the distributed online saddle point dynamics are such that the disagreement of agents, the regret and the fit are bounded by sublinear functions of the time horizon (Section \ref{sec_analysis}). This suggests that our proposed algorithm achieves consensus, feasibility and optimality as the time goes to infinity.  Central to the development of this result are the definitions of global fit and regret (Section \ref{sec_problem_formulation}). The former is defined as a vector that contains the time integrals of the constraints of all the agents evaluated across the trajectory of a specific agent. Having small global fit means that the agent is able to satisfy the constraints of every other agent, and thus if it is placed in a different position in the network its performance is maintained.  Likewise, the global regret is the evaluation of the accumulated global cost of a specific agent's trajectory with respected to the optimal centralized clairvoyant solution.

To solve the constrained online optimization problem under consideration, we propose an online distributed saddle point algorithm to control the growth of the global fit and regret. Saddle point algorithms update the primal variables -- the actions -- along the negative of a weighted linear combination of the gradients of each constraints. Since the feasible set of actions is typically represented by the intersection of the sub level sets of the constraints functions, this linear combination pushes the actions towards feasibility. The weights of this linear combination are updated according to the current violation of the constraints. If an action violates a constraint by much, its corresponding weight is increased faster, whereas if a constraint is satisfied its corresponding weight is reduced.  We start by showing that if an action that is feasible for all agents and for all times exists, then the network disagreement is bounded by a function that is sublinear with respect to the time horizon. Based on this result we establish sublinear global fit and global regret. We also illustrate our algorithm on a problem involving a team of robots driving through an urban environment to perform real-time texture classification for the purpose of mapping and object recognition. We show that the team of robots succeds in training a common classifier that allows them to distinguish between grass and pavement images even when some of the agents have only observed one of the classes.
The rest of this paper is organized as follows. In Section \ref{sec_problem_formulation} we formalize the online distributed optimization problem. Later we present the Distributed Online Saddle Point Algorithm (Section \ref{sec_algorithm}) and we establish that it achieves consensus, feasibility and optimality in Section \ref{sec_analysis}. Other than concluding remarks the paper finishes with numerical examples in section \ref{sec_numerical}.

 

%
\section{Constrained Online Learning in Networks}\label{sec_problem_formulation}

We consider a group of $N$ agents linked by an undirected connected graph $\ccalG = \left\{\ccalV,\ccalE\right\}$ where $\ccalV = \{ 1, \ldots, N \}$ is a set of nodes and $\ccalE$ is a set of edges so that $(i,j) \in \ccalE$ means that $i$ and $j$ are connected to each other. The set $\ccalN_i:=\{j:(i,j) \in \ccalE\}$ contains all nodes that are connected to $i$ and is called the neighborhood of $i$. Note that since the graph is undirected, node $j$ is in the neighborhood of $i$ if and only if node $i$ is in the neighborhood of $j$. 

We are interested in situations where the agents in $\ccalG$ have access to arbitrarily time varying local constraints and local objective functions and continuously select actions that are good not only for their local constraints and costs but for the local constraints and costs of other agents. To explain this formally, let $t\in\reals^+$ be a continuous time index, $f_{i}(t,\cdot): \reals^n\to \reals^{m_i}$ be a set of $m_i$ convex constraints at agent $i$ and $f_{0i}(t,\cdot): \reals^n\to \reals$ be a local convex cost incurred at node $i$. A {\it local} goal of node $i$ is to select an action $\bbx_i\in\ccalX\subseteq\reals^n$ that satisfies local constraints $f_{i}(t,\bbx_i)\preceq 0$ across a time interval $[0,T]$ while minimizing the cost $f_{0i}(t,\bbx_i)$ integrated over the same interval. This is tantamount to defining the locally optimal solution $\bbx_i^\ell$ over the interval $[0,T]$ as
\begin{alignat}{2}\label{eqn_local_optimization_problem}
   \bbx_i^\ell := & \argmin_{\bbx_i\in \ccalX} \ && \int_0^T f_{0i}(t,\bbx_i) \;dt \nonumber \\ 
                     & \st                   \ && f_i(t,\bbx_i)\preceq 0, 
                                             \    \forall\ t\in [0,T].
\end{alignat}
Problem \eqref{eqn_local_optimization_problem} models situations in which each of the agents is acting independently since the optimal action of $i$ depends on its local cost and constraints, and not on those of other agents. Instead, here we are interested in situations where the actions of agents are coordinated, so that an action $\bbx_i$ of agent $i$ can affect the costs and constraints of other agents. This results in a {\it global} formulation in which the optimal action of each agent $\bbx_i^*$ is defined as the one that satisfies the constraints of all agents and minimizes the integral of the sum cost, 
\begin{alignat}{2}\label{eqn_global_optimization_problem}
   \bbx_i^*\ :=\ &\argmin_{\bbx_i\in \ccalX} \
                   &&     \int_0^T \sum_{j=1}^N f_{0j}(t,\bbx_i) \;dt , \nonumber\\
                 & \st \                   
                   &&     f_j(t,\bbx_i)\preceq 0, \quad
                          \forall\ j \text{\ and\ } t\in [0,T].
\end{alignat}
We say that the problem in \eqref{eqn_global_optimization_problem} is global because the action $\bbx_i$ is evaluated at the constraint and costs of all nodes. This readily implies that $\bbx_i^* = \bbx_j^*$ and that there is a single global action that is optimal for all nodes. For future reference we define the sum cost function $f_{0}(t,\bbx_i):=\sum_{j=1}^N f_{0j}(t,\bbx_i)$ and the aggregate constraint $f(t,\bbx_i) := [f_1(t,\bbx_i); \ldots; f_N(t,\bbx_i)]$. With this notation, the problem in \eqref{eqn_global_optimization_problem} can be rewritten as  
\begin{alignat}{2}\label{eqn_global_optimization_problem_simplified_notation}
   \bbx_i^* \ =\  & \argmin_{\bbx_i\in \ccalX}  && \int_0^T  f_{0}(t,\bbx_i) \;dt, \nonumber\\
                  & \st                \ && f(t,\bbx_i)\preceq 0, \quad
                                        \forall\ t\in [0,T].
\end{alignat}
While \eqref{eqn_local_optimization_problem} models agents acting in isolation, \eqref{eqn_global_optimization_problem} and \eqref{eqn_global_optimization_problem_simplified_notation} model agents acting in concert. The latter situation arises when  local functions are related to a common variable. E.g., the costs and constraints can represent local observations of a parameter to be estimated \cite{jakubiec2013d} or local observations and costs of a plant to be controlled \cite{nedic2010constrained}. Problems having the form of \eqref{eqn_global_optimization_problem_simplified_notation} also arise in large scale optimization where costs and constraints are {\it not} acquired locally but are distributed over several servers to reduce computation and storage \cite{braun1996collaborative}.

If the functions $f_{0i}(t,\cdot)$ and $f_{i}(t,\cdot)$ are available for all times $t\in [0,T]$, solving \eqref{eqn_global_optimization_problem_simplified_notation} reduces to solving a  distributed convex optimization problem for which a number of standard algorithms are applicable; see e.g., \cite{boyd2011distributed,nedic2015distributed}. In this paper we consider problems in which the constraints $f_i(t,\cdot)$ and costs $f_0(t,\cdot)$ are arbitrary and observed causally and locally by node $i$. In this setting it makes sense to consider time varying strategies $\bbx_i(t)$ that adapt the action of agent $i$ to the information that is revealed at time $t$. In this context the optimal argument in \eqref{eqn_global_optimization_problem_simplified_notation} is a clairvoyant action that would be chosen when agents have knowledge of the future evolution of the system at time $t=0$. The appropriate figures of merit in this case are the notions of regret \cite{Zinkevich03, shalev2011online, hazan2007logarithmic} and fit \cite {PaternainRibeiro16} that we generalize to network settings in the following section. {These quantities compare the performance of the online distributed operation with the offline centralized solution of \eqref{eqn_global_optimization_problem_simplified_notation}.}

Before proceeding with definitions of network regret and fit, notice that for the definition in \eqref{eqn_global_optimization_problem_simplified_notation} to be valid the function $f_0(t,\bbx)$ has to be integrable with respect to the time variable $t$. In subsequent definitions and analyses we further require the network to be connected and the constraints $f_i$ for all $i\in\ccalV$, to be integrable, convex and Lipschitz continuous with respect to $\bbx$ for all times $t$.  We formally state these assumptions next.
%
%
\begin{assumption}\label{assumption_network}
The network is connected with diameter $D$, i.e., the shortest distance between the two most distant nodes in the network is $D$. 
\end{assumption}
%
\begin{assumption}\label{assumption_convexity}
Let $\ccalX$ be a compact convex set and the functions $f_{0i}(t,\bbx)$ and $f_i(t,\bbx)$ be integrable with respect to $t$ and convex with respect to $\bbx\in\ccalX$ for all $t\in [0,T]$. We further assume that cost and constraints are Lipschitz continuous over $\ccalX$ with respective constants $L_0>0$ and $L_f>0$. I.e., for any $\bbx,\bby \in \ccalX$ and all $t\in[0,T]$ the cost functions satisfy 
\begin{equation}
   \big|f_{0i}(t,\bbx)-f_{0i}(t,\bby)\big| \leq L_0\big\|\bbx-\bby\big\|,
\end{equation}
and the constraint functions satisfy
\begin{equation}
   \big|f_{kj}(t,\bbx)-f_{kj}(t,\bby)\big| \leq L_f\big\|\bbx-\bby\big\|,
\end{equation}
where $f_{kj}(t,\cdot)$ denotes the $j$th component of the vector valued constraint function $f_{k}(t,\cdot)$.
\end{assumption}

%
We remark that integrability with respect to $t$ is a weak condition. We do not require differentiability, not even continuity. This entails a fundamental difference with time varying optimization problems that strive to track a time varying optimal argument under the assumption of smooth time varying costs and constraints \cite{jakubiec2013d,simonetto2016class,fazlyab2018prediction}. The goal here is to design an algorithm that can adapt to unexpected changes in the system, including, indeed, most importantly, to those that arise because of discontinuities in the cost and constraint functions. Another requirement for $\bbx_i^*$ to be well defined is existence of an action $\bbx^\dagger \in \ccalX$ that satisfies the constraints at all times and all nodes as we formally state next.

%
\begin{assumption}\label{assumption_viability}
There exists an action $\bbx^\dagger \in \ccalX$ that satisfies the constraints of all agents for all times $t\in[0,T]$,
\begin{equation}
   f_i(t,\bbx^\dagger)\prec 0,  \qquad \forall\ i \text{\ and\ } t\in[0,T].
\end{equation}
We say that $\ccalX^{\dagger} := \left\{ \bbx^\dagger \in \ccalX: f_i(t,\bbx^\dagger)\preceq 0,  \forall i \text{\ and\ } t\in[0,T].\right\}$ is the set of feasible actions.
\end{assumption}
%
%
We require as well that minimum of the objective function does not become progressively smaller with time so that a uniform bound $K$ holds for all times $t\in[0,T]$. 
The existence of the bound in \eqref{eqn_lower_bound} is a mild requierement. Since the function $f_0(t,\bbx)$ is convex, for any time $t$ it is lower bounded for compact set of actions $\ccalX$. The only restriction imposed is that $\min_{\bbx \in \ccalX^N} f_0(t,\bbx)$ does not become progressively smaller with time so that a uniform bound $K$ holds for all times $t\in[0,T]$.

\begin{assumption}\label{assumption_bound}
  There exists $K>0$ independent of the time horizon $T$ such that for all $t\in [0,T]$ it holds that
  \begin{equation}\label{eqn_lower_bound}
f_0(t,\bbx^*)-\min_{x\in \ccalX^N} f_0(t,\bbx) \leq K,
  \end{equation}
  where $\bbx^*$ is the solution to the problem \eqref{eqn_global_optimization_problem_simplified_notation}.
  \end{assumption}
%
%
%
\subsection{Network Regret and Network Fit}  
 %
%
To evaluate the cost performance of such trajectories we define the notions of network regret and network fit. Begin then by considering a trajectory $\bbx_i(t)$ chosen by agent $i$ and the total accumulated cost $\int_0^T f_{0j}(t,\bbx_i(t))dt$ that this trajectory incurs for 
a possibly different agent $j$. We define the regret $\ccalR_{Tj}^i$ as the difference of this accumulated cost relative to the corresponding cost that would be incurred by the optimal trajectory $\bbx_i^*$ of \eqref{eqn_global_optimization_problem_simplified_notation},
\begin{equation}\label{eqn_network_regret}
   \ccalR_{Tj}^i := \int_0^T f_{0j}(t,\bbx_i(t) ) \,dt 
                          -\int_0^T f_{0j}(t,\bbx_i^*  ) \,dt.
\end{equation}
Likewise, we consider the accumulation $\int_0^T f_{j}(t,\bbx_i(t))dt$ of constraint of agent $j$ incurred by the trajectory of agent $i$. The fit $\ccalF_{Tj}^i$ is defined as the comparison of this constraint accumulation relative to the corresponding constraint accumulation of the optimal trajectory $\bbx_i^*$
\begin{equation}\label{eqn_network_fit}
  \ccalF_{Tj}^i := \int_0^T f_{j}(t,\bbx_i(t) ) \,dt 
                          -\int_0^T f_{j}(t,\bbx_i^*  ) \,dt.
\end{equation}
The action $\bbx_i^*$ can be considered as an offline reference that would be chosen by an entity that is clairvoyant, because it observes the future, and omniscient, because it observes the costs and constraints of all nodes. Our objective is to consider trajectories that are chosen online by agents that are causal, because they observe the past, and local, because they observe their local costs and exchange information with neighboring nodes only. In this context regret and fit can be interpreted as performance losses associated with online causal and local operation as opposed to offline clairvoyant and omniscient operation. If $\ccalF_{Tj}^i$ is positive we are in a situation in which, had the constraints of all agents be known beforehand, we could have selected an action $\bbx^\dagger$ to satisfy all constraints. Due to its cumulative nature, it is possible to achieve small (local) fit by alternating between actions for which the constraints take positive and negative values. This is an appropriate model for quantities that can be stored -- such as energy budgets enforced through average power constrains. However, in other settings this formulation can be a limitation. This drawback can be overcome by defining the saturated fit in which constraints slacks are saturated to a small negative constant. We discuss this in Section \ref{sec_saturated_fit}.

The fit measures how far the trajectory $\bbx(t)$ is from achieving that goal. Analogously, if the regret $\ccalR_{T j}^i$ is large we are in a situation in which prior knowledge of the objective functions and constraints would had resulted in the selection of an strategy $\bbx^*$ that achieves much better performance than the one achieved by $\bbx_i(t)$. In that sense $\ccalR_{T j}^i$ indicates how much we regret not having had that information a priori. 

A good learning strategy is one that achieves small regret and fit as that would be an indication that the trajectory $\bbx(t)$ approaches $\bbx^*$. Notice however that since the objective function and the constraints are integrated over a time horizon $T$, it is natural to expect the cost and constraints to grow linearly with $T$. Thus, having regret and fit that grow at a sublinear rate is sufficient indication of a good learning strategy. This intuition motivates the following definitions of feasible and optimal trajectories.

\begin{definition}\label{def_feasibility_optimality}
We define an environment as a set of constraints $f_j:\reals\times \reals^n \to \reals^{m_j}$ and costs $f_{0j}:\reals\times \reals^n \to \reals$ for all $j\in\ccalV$. For a trajectory $\bbx_i(t)$ we consider the regret and fit definitions in \eqref{eqn_network_regret} and \eqref{eqn_network_fit} and further define the sum regret $\ccalR_{T}^i := \sum_{j\in\ccalV}\ccalR_{Tj}^i$ and the network wide fit $\ccalF_T^j = [ \ccalF_{T1}^{j\top}, \ldots, \ccalF_{TN}^{j\top}  ]^\top$. We say that:

\begin{mylist}
\item[\bf Feasibility.] The trajectories are feasible in the environment if all the local fits $\ccalF_{T}^i$ with $i\in\ccalV$ grow sublinearly with $T$. I.e., there exist a function $h(T)$ with $\limsup_{T\to\infty} h(T)/T = 0$  and a constant vector $C_f$ such that for all times $T$ it holds,
\begin{equation}\label{eqn_feasiibility}
   \ccalF_{T}^i := \int_0^T f(t,\bbx_i(t)) \,dt \leq C_f h(T).
\end{equation} 
\item[\bf Optimality.] The trajectories are optimal in the environment if all regrets $\ccalR_{T}^i$ grow sublinearlly for all $i\in\ccalV$ and $T$. I.e. there exist a function $h(T)$ with $\limsup_{T\to\infty} h(T)/T = 0$ and a constant $C$ such that for all times $T$ it holds,
\begin{equation}\label{eqn_optimality}
   \ccalR_{T}^i := \int_0^T f_0(t,\bbx_i(t)) \,dt - \int_0^T f_0(t,\bbx^*) \,dt  \leq Ch(T).
\end{equation}
\end{mylist} \end{definition}
In the next section we develop the details of a distributed and online version of the Arrow-Hurwicz algorithm, such that its generated trajectories are feasible and optimal in the sense of Definition \ref{def_feasibility_optimality}. The latter is formally stated and proved in Section \ref{sec_analysis} along with an intermediate result that claims that the disagreement across agents is sublinear with respect to the time horizon, hence suggesting consensus. Before doing so, we make a pertinent remark highligthing the differences between the results in this work and those achieved for the centralized algorithm.
\begin{remark}\label{rmk_existing_literature}
    The centralized version of the problem here considered \eqref{eqn_global_optimization_problem_simplified_notation}, can be solved -- when affordable -- by an online saddle-point algorithm \cite{PaternainRibeiro16}. The trajectories that arise from such algorithm achieve (i) regret bounded by a constant independent of the time horizon $T$ for the unconstrained problem; (ii) fit bounded by a constant independent of the time horizon $T$ if the objective function is constant with respect to the action, i.e., feasibility problems and (iii) regret bounded by a constant independent of the time horizon $T$ and fit bounded by a sublinear function of $T$ otherwise. By considering the distributed version of the previous algorithm we cannot establish regret or fit bounded by constants independent of $T$ because of the disagreement across agents.  
\end{remark}
%


\section{Distributed Online Saddle point}\label{sec_algorithm}
{
The problem defined in \eqref{eqn_global_optimization_problem_simplified_notation} is a centralized optimization problem in which all agents should select the same action. Since each agent $i\in\ccalV$ has access only to the local cost and constraints, a more natural representation of the problem \eqref{eqn_global_optimization_problem_simplified_notation} is one where each agent selects a local decision vector $\bbx_i\in\mathbb{R}^n$.} Nodes then try to achieve the minimum of their local objective functions $f_{0i}(t,\bbx_i)$ while satisfying the local constraints $f_i(t,\bbx_i)\preceq 0$ and keeping their variables equal to the variables $\bbx_j$ of neighboring nodes $j\in\ccalN_i$. By defining $\bbx = [\bbx_1^\top,\ldots, \bbx_N^\top]^\top$, this alternative formulation can be written as 
\begin{alignat}{2}\label{eqn_consensus_problem}
   \bbx^* := & \argmin_{\bbx\in \ccalX^N} \ 
                  && \int_0^T f_{0}(t,\bbx) \;dt =\argmin_{\bbx\in \ccalX^N}                    \int_0^T \sum_{i=1}^Nf_{0i}(t,\bbx_i) \;dt\nonumber\\
             & \st \ 
                  &&  f_i(t,\bbx_i)\preceq 0, \forall t\in [0,T], \forall i\in\ccalV, \\
             &    &&  \bbx_i = \bbx_j, \forall i\in\ccalV,j \in \ccalN_i.\nonumber
\end{alignat}
Since the network is assumed to be connected (cf., Assumption \ref{assumption_network}), the constraints $\bbx_i=\bbx_j$ for all $i$ and $j\in\ccalN_i$ imply that \eqref{eqn_global_optimization_problem_simplified_notation} and \eqref{eqn_consensus_problem} are equivalent. The previous problem 
can solved in a distributed manner with a variety of methods, one of which is he saddle point algorithm of Arrow and Hurwicz \cite{arrow_hurwicz}. In this work we aim to extend the saddle point algorithm to control the growth of regret and fit. In doing so it is convenient to relax the consensus constraints $\bbx_i = \bbx_j$ in \eqref{eqn_consensus_problem} to allow for some controlled disagreement. We accomplish this by defining the set of constraints
\begin{equation}
g_{ij}(\bbx_i,\bbx_j) = \left\|\bbx_i-\bbx_j\right\|^2-\gamma \leq 0, 
\end{equation}
where $\gamma$ is a positive constant limiting how much constraint violation is allowed. Notice that the parameter $\gamma$ could be set to be arbitrarily small. The advantage of using a controlled disagreement is that it allows for agents to achieve a good global performance without damaging excessively the local performance, which in some applications might be important as well. By allowing larger values of $\gamma$, we allow more disagreement and therefore we prioritize the local performance, whereas by making $\gamma$ closer to zero the goal is set in the centralized performance. The same relaxation is considered in \cite{koppel2016proximity} in the case of unconstrained distributed optimization. The proximity constraints allows to write the problem of interest in the following form
\begin{alignat}{2}\label{eqn_optimization_problem}
   \tbx^* := & \argmin_{\bbx\in \ccalX^N} \ 
                  && \int_0^T f_{0}(t,\bbx) \;dt \nonumber\\
             & \st \ 
                  &&  f_i(t,\bbx_i)\preceq 0, \forall t\in [0,T], \forall i\in\ccalV, \\
             &    &&  g_{ij}(\bbx_i,\bbx_j)= \left\|\bbx_i - \bbx_j\right\|^2-\gamma\leq 0, \forall i,j\in\ccalV. \nonumber
\end{alignat}
For the above online optimization problem we can construct the following time varying Lagrangian
\begin{equation}\label{eqn_lagrangian}
{\ccalL}(t,\bbx,\bm{\lambda},\bm{\mu}) = f_0(t,\bbx)+\sum_{i=1}^N\left(\bm{\lambda}_i^\top f_i(t,\bbx_i)+\bm{\mu}_i^\top g_i(\bbx)\right),
  \end{equation}
where $\bm{\lambda}_i\in\mathbb{R}_+^{m_i}$ for $i=1\ldots N$ and $\bm{\mu}_i\in\mathbb{R}_+^{|\ccalN_i|}$ for $i=1\ldots N$ are the Lagrange multipliers and where $g_i(\bbx)\in\mathbb{R}^{|\ccalN_i|}$ is the vector with components $g_{ij}(\bbx_i,\bbx_j)$ for all $j\in \ccalN_i$. Saddle point methods rely on the fact that for a constrained convex optimization problem, a pair is a primal-dual solution if and only if it is a saddle point of the Lagrangian associated with the problem, see e.g. \cite{boyd2004convex}. This is the case in problem \eqref{eqn_optimization_problem} since $f_0(\cdot,\bbx)$, $f(\cdot,\bbx)$ and $g(\cdot,\bbx)$ are convex (c.f. Assumption \ref{assumption_convexity}). In addition, because $\bm{\lambda},\bm{\mu} \succeq 0$, the Lagrangian is convex with respect to $\bbx$ and therefore the subgradient with respect to $\bbx$  exists for all time $t\geq 0$, let us denote it by $\ccalL_x(t,\bbx,\bm{\lambda},\bm{\mu})$. The Lagrangian is linear with respect to $\bm{\lambda}$ and $\bm{\mu}$ and therefore its partial derivatives with respect to these variables exist. Let us denote them by $\ccalL_{\lambda}(t,\bbx,\bm{\lambda},\bm{\mu})$ and $\ccalL_{\mu}(t,\bbx,\bm{\lambda},\bm{\mu})$ respectively. The actions $\bbx$ are updated -- as in the classic Arrow-Hurwicz algorithm -- by following the negative subgradient of the Lagrangian with respect to $\bbx$ 
\begin{equation}\label{eqn_grad_descent_basic}
\begin{split}
\dot{\bbx} &= -\ccalL_x(t,\bbx,\bm{\lambda},\bm{\mu}) \\&= - f_{0,x}(t,\bbx) - \sum_{i=1}^N f_{i,x}(t,\bbx_i)^\top \bm{\lambda}_i 
- \sum_{i=1}^N \sum_{j\in\ccalN_i} \mu_{ij} g_{ij,x}(\bbx),
\end{split}
  \end{equation}
where $\ccalN_i$ is the set of neighbors of node $i$. The primal update interprets the constraints as a potentials with corresponding weights $\bm{\lambda}$ and $\bm{\mu}$ and descends along a linear combination of the gradients of said potentials. The multipliers are then updated by following the subgradient of the Lagrangian with respect to them
\begin{subequations}\label{eqn_grad_ascent_basic}
  \begin{equation}
\dot{\bm{\lambda}} = \ccalL_{\lambda}(t,\bbx,\bm{\lambda},\bm{\mu}) = f(\bbx),
  \end{equation}
    \begin{equation}
\dot{\bm{\mu}} = \ccalL_{\mu}(t,\bbx,\bm{\lambda},\bm{\mu}) = g(\bbx).
    \end{equation}
  \end{subequations}
The intuition behind the latter update is that if a constraint is violated, for instance $f_{1,1}(\bbx) >0$ the corresponding multiplier, $\bm{\lambda}_{1,1}$ will be increased, thus augmenting the relative weight of this potential in the linear combination in \eqref{eqn_grad_descent_basic}. Which in turn pushes the action towards satisfying said constraint. On the other hand, if the constraint is satisfied, the weight of that potential will be reduced, thus making the direction of the gradient of the function less important in the weighted linear combination. Observe that the multipliers need to remain positive at all time to ensure the convexity of the Lagrangian with respect to $\bbx$, yet if a multiplier takes the value zero and its corresponding constraint is satisfied, the previous update turns the multiplier negative. To avoid this issue, we will require a projection over the positive orthant. We formalize this idea next, after making the observation that the update \eqref{eqn_grad_descent_basic}--\eqref{eqn_grad_ascent_basic} is indeed distributed. To see this, write the Lagrangian as a sum of the following local Lagrangians
\begin{equation}
  \begin{split}
    \ccalL^i(t,\bbx,\bm{\lambda},\bm{\mu}) &=  f_0^i(t,\bbx_i) +\bm{\lambda}_i^\top f_i(t,\bbx_i) \\
    &+ \sum_{j\in\ccalN_i}{\mu}_{ij}\left(\left\|\bbx_i-\bbx_j\right\|^2-\gamma\right),
\end{split}
  \end{equation}
where to compute each local Lagrangian, agent $i$ needs only information regarding its variables and those of its neighbors. Then, each agent can compute locally the gradient of the Lagrangian with respect to its local variable $\bbx_i$ and perform the update described in \eqref{eqn_grad_ascent_basic}--\eqref{eqn_grad_descent_basic}, with the caveat that to ensure that the multipliers are always positive we need to consider a projected dynamical system. We formalize this idea next.  
%
\begin{definition}[\bf Projection of a vector at a point]
  Let $K\subset \mathbb{R}^n$ be a compact convex set. Then, for any $\bby\in K$ and $\bbv\in\mathbb{R}^n$, we defined the projection of $\bbv$ over the set $K$ at the point $\bby$ as
  \begin{equation}
\Pi_K\left[\bby,\bbv\right] = \lim_{\xi\to 0^+} \frac{P_K(\bby+\xi\bbv)-\bby}{\xi},
    \end{equation}
  where the standard projection $P_K(\bbz) = \argmin_{\bby\in K}\|\bby-\bbz \|^2$ is always well defined because $K$ is convex.
  \end{definition}
%
The intuition behind the projection is that, if the point $\bby$ is in the interior of the set $K$ then the projection of the vector $\bbv$ is the vector itself. In cases where $\bby$ is in the boundary of the set $K$, the projection of $\bbv$ is its component tangental to the boundary of $K$. With this definition at hand, and by defining the gain of the controller to be $\varepsilon>0$ we define the distributed online saddle point controller as follows. Each agent updates its action by following the negative subgradient of the Lagrangian with respect to its local copy of the action $\bbx_i$ 
 \begin{equation}\label{eqn_gradient_descent}
\dot{\bbx}_i = \Pi_{\ccalX}\left[\bbx_i, -\varepsilon\ccalL_{x_i}(t,\bbx,\bm{\lambda},\bm{\mu})\right],
\end{equation}
Likewise, the multipliers $\bm{\lambda}_i$ and $\bm{\mu}_i$ are updated by ascending along the direction of the gradient of the Lagrangian with respect to $\bm{\lambda}$ and $\bm{\mu}$ respectively, i.e., 
\begin{subequations}\label{eqn_gradient_ascent}
\begin{equation}\label{eqn_gradient_ascent_lambda}
\dot{\bm{\lambda}}_i = \Pi_{+}\left[\bm{\lambda}_i,\varepsilon\ccalL_{\lambda_i}(t,\bbx,\bm{\lambda},\bm{\mu})\right],
\end{equation}
\begin{equation}\label{eqn_gradient_ascent_mu}
\dot{{\mu}}_{ij} = \Pi_{+}\left[\mu_{ij},\varepsilon\ccalL_{\mu_{ij}}\left(t,\bbx,\bm{\lambda},\bm{\mu}\right)\right].
\end{equation}
\end{subequations}
The three gradients can be computed in a distributed fashion since they only depend on each agent's own variables and those of their neighbors. In \cite{PaternainRibeiro16} it was shown that in the centralized case, a saddle point algorithm such as the one described by \eqref{eqn_gradient_descent} and \eqref{eqn_gradient_ascent} achieves feasible and strongly optimal trajectories, i.e., fit bounded by a sublinear function of the time horizon and regret bounded by function that is constant with respect to the time horizon. In this work we show that the distributed version of said algorithm (c.f. \eqref{eqn_gradient_descent} and \eqref{eqn_gradient_ascent}) achieves feasible and optimal trajectories in the sense of Definition \ref{def_feasibility_optimality}. Moreover, the network disagreement is bounded by a function that is sublinear with respect to the time horizon. These results are the subject of the next section. 


\section{Feasible and Optimal trajectories}\label{sec_analysis}
Let us consider an energy-like function which will be used in subsequent analysis. Let $\tilde{\bbx}\in \ccalX^N$, $\tilde{\bm{\lambda}} \in \mathbb{R}_+^{\sum_i m_i}$, $\tilde{\bm{\mu}}\in\mathbb{R}_{+}^{\sum_i |\ccalN_i|}$, where we denote by $|\ccalN_i|$ the cardinality of the set of neighbors of node $i$, and define the function 
\begin{equation}\label{eqn_energy_function}
V_{\tilde{\bbx},\tilde{\bm{\lambda}},\tilde{\bm{\mu}}}(\bbx,\bm{\lambda},\bm{\mu}) = \frac{1}{2}\left( \|\bbx-\tilde{\bbx}\|^2 +\|\bm{\lambda}-\tilde{\bm{\lambda}}\|^2 + \|\bm{\mu}-\tilde{\bm{\mu}}\|^2\right).
\end{equation}
By considering the time derivative of the previous function along the dynamics \eqref{eqn_gradient_descent}--\eqref{eqn_gradient_ascent} we establish that the integral of the difference of the Lagrangian evaluated at $(\bbx(t), \tilde{\bm{\lambda}},\tilde{\bm{\mu}})$ and the Lagrangian evaluated at $(\tilde{\bbx}, \bm{\lambda}(t), \bm{\mu}(t))$ is bounded by a constant independent of the time horizon $T$. The following lemma -- key to establish that the saddle point dynamics yield feasible and optimal trajectories -- formalizes this result.
%
\begin{lemma}\label{lemma_key_lemma}
    Let Assumptions \ref{assumption_network}--\ref{assumption_viability} hold. Then for any $T\geq 0$ the solutions of the dynamical system \eqref{eqn_gradient_descent}--\eqref{eqn_gradient_ascent} satisfy
  \begin{equation}\label{eqn_key_lemma}
  \begin{split}
    \int_0^T\ccalL(t,\bbx(t),\tilde{\bm{\lambda}},\tilde{\bm{\mu}})& -\ccalL(t,\tilde{\bbx},\bm{\lambda}(t),\bm{\mu}(t))\, dt\\
    &\leq  \frac{{V}_{\tilde{\bbx},\tilde{\bm{\lambda}},\tilde{\bm{\mu}}}(\bbx(0),\bm{\lambda}(0),\bm{\mu}(0))}{\varepsilon},
  \end{split}
  \end{equation}
  where ${V}_{\tilde{\bbx},\tilde{\bm{\lambda}},\tilde{\bm{\mu}}}(\bbx(0),\bm{\lambda}(0),\bm{\mu}(0))$ is the energy-function defined in \eqref{eqn_energy_function} eavluated values for the actions and multipliers and time zero and $\tilde{\bbx},\tilde{\bm{\lambda}}, \tilde{\bm{\mu}}$ are arbitrary.
\end{lemma}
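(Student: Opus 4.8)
The plan is to treat $V_{\tilde{\bbx},\tilde{\bm{\lambda}},\tilde{\bm{\mu}}}$ as a Lyapunov/energy function and to show that its time derivative along the saddle point flow \eqref{eqn_gradient_descent}--\eqref{eqn_gradient_ascent} is controlled by the Lagrangian gap on the left of \eqref{eqn_key_lemma}. Differentiating $V$ along a trajectory gives
\begin{equation*}
\dot V = (\bbx-\tilde{\bbx})^\top\dot\bbx + (\bm{\lambda}-\tilde{\bm{\lambda}})^\top\dot{\bm{\lambda}} + (\bm{\mu}-\tilde{\bm{\mu}})^\top\dot{\bm{\mu}},
\end{equation*}
so the whole argument reduces to bounding these three inner products and then integrating over $[0,T]$.

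First I would invoke the defining property of the projection $\Pi_K[\cdot,\cdot]$: since $\Pi_K[\bby,\bbv]$ is the projection of $\bbv$ onto the tangent cone of $K$ at $\bby$, the discarded part $\bbv-\Pi_K[\bby,\bbv]$ lies in the normal cone, so for every $\bbz\in K$ one has $(\bby-\bbz)^\top\Pi_K[\bby,\bbv]\le(\bby-\bbz)^\top\bbv$. Applying this with $K=\ccalX^N$ and $\bbz=\tilde{\bbx}$ for the primal flow, and with $K$ the positive orthant and $\bbz=\tilde{\bm{\lambda}},\tilde{\bm{\mu}}$ (legitimate because the hypotheses place the tildes in $\reals_+$) for the dual flows, replaces each projected field by the raw field at the cost of an inequality:
\begin{equation*}
\dot V \le \varepsilon\Big[-(\bbx-\tilde{\bbx})^\top\ccalL_x + (\bm{\lambda}-\tilde{\bm{\lambda}})^\top\ccalL_\lambda + (\bm{\mu}-\tilde{\bm{\mu}})^\top\ccalL_\mu\Big],
\end{equation*}
where the arguments $(t,\bbx,\bm{\lambda},\bm{\mu})$ are suppressed.

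Next I would convert these inner products into Lagrangian differences using the convexity/linearity structure of \eqref{eqn_lagrangian}. Convexity of $\ccalL(t,\cdot,\bm{\lambda},\bm{\mu})$ in $\bbx$ (Assumption \ref{assumption_convexity}) and the subgradient inequality give $-(\bbx-\tilde{\bbx})^\top\ccalL_x \le \ccalL(t,\tilde{\bbx},\bm{\lambda},\bm{\mu})-\ccalL(t,\bbx,\bm{\lambda},\bm{\mu})$, while linearity of $\ccalL$ in $(\bm{\lambda},\bm{\mu})$ together with $\ccalL_\lambda=f(\bbx)$ and $\ccalL_\mu=g(\bbx)$ yields the exact identity $(\bm{\lambda}-\tilde{\bm{\lambda}})^\top\ccalL_\lambda+(\bm{\mu}-\tilde{\bm{\mu}})^\top\ccalL_\mu = \ccalL(t,\bbx,\bm{\lambda},\bm{\mu})-\ccalL(t,\bbx,\tilde{\bm{\lambda}},\tilde{\bm{\mu}})$. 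Adding these, the two copies of $\ccalL(t,\bbx,\bm{\lambda},\bm{\mu})$ cancel and I obtain
\begin{equation*}
\dot V \le \varepsilon\big[\ccalL(t,\tilde{\bbx},\bm{\lambda}(t),\bm{\mu}(t)) - \ccalL(t,\bbx(t),\tilde{\bm{\lambda}},\tilde{\bm{\mu}})\big].
\end{equation*}
Rearranging and integrating from $0$ to $T$, the right-hand side telescopes to $(V(0)-V(T))/\varepsilon$, and discarding the nonnegative term $V(T)\ge 0$ yields exactly \eqref{eqn_key_lemma}.

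The main obstacle I anticipate is the rigorous handling of the projected dynamical system: the right-hand sides of \eqref{eqn_gradient_descent}--\eqref{eqn_gradient_ascent} are discontinuous on the boundaries of $\ccalX^N$ and of the orthant, so the trajectories are only absolutely continuous (Carath\'eodory) solutions and $V$ is differentiable merely almost everywhere. I would need to argue that $t\mapsto V(\bbx(t),\bm{\lambda}(t),\bm{\mu}(t))$ is absolutely continuous, so that $\int_0^T\dot V\,dt = V(T)-V(0)$ is justified, and that the projection inequality above holds a.e.\ along the trajectory. A secondary point is that $\ccalL_x$ is a subgradient rather than a gradient when the constraints are nonsmooth, but this is harmless since the subgradient inequality is precisely what the convexity step uses.
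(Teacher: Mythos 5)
Your proposal is correct and follows essentially the same route as the paper's proof in Appendix A: differentiate $V$ along the flow, use the projection inequality (the paper cites Lemma 1 of \cite{PaternainRibeiro16} for exactly the bound you derive from the normal-cone argument), convert the primal inner product to a Lagrangian difference via convexity and the dual ones via linearity, cancel, integrate, and drop $V(T)\geq 0$. Your closing remarks on absolute continuity of the trajectories and a.e.\ differentiability of $V$ address a technical point the paper passes over silently, but they do not change the argument.
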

\begin{proof}
  See Appendix \ref{ap_key_lemma}.
\end{proof}
%
%
By analyzing the expression \eqref{eqn_key_lemma} for different choices of $\tilde{\bbx}, \tilde{\bm{\lambda}}$ and $\tilde{\bm{\mu}}$ it is possible to establish that the saddle point dynamics \eqref{eqn_gradient_descent}--\eqref{eqn_gradient_ascent} yields sublinear network disagreement for all $T>0$. We formalize this result in Proposition \ref{proposition_disagreement}. 
\begin{proposition}[\bf Sublinear Network Disagreement]\label{proposition_disagreement}
  Let Assumptions \ref{assumption_network}--\ref{assumption_bound} hold. Then for any $T\geq 0$ the solutions of the dynamical system \eqref{eqn_gradient_descent}--\eqref{eqn_gradient_ascent} are such that the network disagreement is sublinear with respect to $T$. In particular for $\bm{\lambda}(0) =0$ and $\bm{\mu}(0)=0$, for any $i,j \in\ccalV$ we have that
  \begin{equation}\begin{split}
      &\int_0^T\left\|\bbx_i(t)-\bbx_j(t)\right\| \, dt \\
      &\leq D\sqrt{(K+\gamma)T +\frac{1}{2\varepsilon}\left(1+\left\|\bbx^*-\bbx(0)\right\|^2\right)},
      \end{split}
    \end{equation}
    where $D$ is the network diameter defined in Assumption \ref{assumption_network}.
  \end{proposition}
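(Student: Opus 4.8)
The plan is to specialize the integral inequality of Lemma~\ref{lemma_key_lemma} to reference points $(\tilde{\bbx},\tilde{\bm{\lambda}},\tilde{\bm{\mu}})$ that single out one proximity constraint, and then to transport the resulting per-edge estimate to an arbitrary pair of nodes along a shortest path in $\ccalG$. Fix an edge $(p,q)\in\ccalE$. I would apply Lemma~\ref{lemma_key_lemma} with $\tilde{\bbx}=\bbx^*$ the optimal consensus action of \eqref{eqn_consensus_problem} (so that $f_i(t,\bbx^*_i)\preceq 0$ for all $t$ and $g_{ij}(\bbx^*_i,\bbx^*_j)=-\gamma\le 0$), with $\tilde{\bm{\lambda}}=\bbzero$, and with $\tilde{\bm{\mu}}$ chosen to be one on the entry associated with $(p,q)$ and zero elsewhere. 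With these choices $\ccalL(t,\bbx(t),\tilde{\bm{\lambda}},\tilde{\bm{\mu}})=f_0(t,\bbx(t))+\|\bbx_p(t)-\bbx_q(t)\|^2-\gamma$, while in $\ccalL(t,\tilde{\bbx},\bm{\lambda}(t),\bm{\mu}(t))$ the multiplier terms $\bm{\lambda}_i(t)^\top f_i(t,\bbx^*_i)$ and $\bm{\mu}_i(t)^\top g_i(\bbx^*)$ are non-positive, because the projection $\Pi_+$ in \eqref{eqn_gradient_ascent} keeps $\bm{\lambda}(t),\bm{\mu}(t)\succeq 0$ and $\bbx^*$ is feasible (Assumption~\ref{assumption_viability}).

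Discarding those non-positive terms, lower bounding $\int_0^T[f_0(t,\bbx(t))-f_0(t,\bbx^*)]\,dt\ge -KT$ through Assumption~\ref{assumption_bound}, and evaluating the right-hand side energy $V_{\tilde{\bbx},\tilde{\bm{\lambda}},\tilde{\bm{\mu}}}(\bbx(0),\bbzero,\bbzero)=\tfrac12(\|\bbx^*-\bbx(0)\|^2+1)$ --- where the additive $1$ is precisely $\|\tilde{\bm{\mu}}\|^2$ --- the master inequality reduces to
\begin{equation*}
  \int_0^T \|\bbx_p(t)-\bbx_q(t)\|^2\,dt \le (K+\gamma)T+\frac{1}{2\varepsilon}\big(1+\|\bbx^*-\bbx(0)\|^2\big),
\end{equation*}
which is exactly the quantity under the square root, now identified as a uniform bound on the accumulated squared disagreement across every edge of the network.

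It then remains to pass from a single edge to an arbitrary pair $i,j\in\ccalV$. Since $\ccalG$ is connected with diameter $D$ (Assumption~\ref{assumption_network}), there is a path $i=v_0,\ldots,v_L=j$ with $L\le D$, so the triangle inequality gives $\|\bbx_i(t)-\bbx_j(t)\|\le\sum_{k=0}^{L-1}\|\bbx_{v_k}(t)-\bbx_{v_{k+1}}(t)\|$; integrating in time and applying Cauchy--Schwarz together with the per-edge estimate produces the outer factor $D$ and the square root, giving the claim. I expect the delicate step to be the specialization of Lemma~\ref{lemma_key_lemma}: one must choose $\tilde{\bm{\mu}}$ so as to retain exactly one proximity constraint while keeping $\|\tilde{\bm{\mu}}\|^2=1$, and one must verify that every discarded dual term carries the correct sign, which hinges on the positivity of the multipliers enforced by $\Pi_+$ and on the feasibility of $\bbx^*$. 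Once the per-edge squared bound is in hand, the path argument and Cauchy--Schwarz are routine bookkeeping.
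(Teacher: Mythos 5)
Your proposal follows the paper's proof essentially step for step: the same specialization of Lemma~\ref{lemma_key_lemma} (take $\tilde{\bbx}=\bbx^*$, $\tilde{\bm{\lambda}}=\bbzero$, and $\tilde{\bm{\mu}}$ with a single unit entry on one edge), the same sign arguments for discarding the dual terms using feasibility of $\bbx^*$ and positivity of the multipliers, the same use of Assumption~\ref{assumption_bound} to absorb $f_0(t,\bbx(t))-f_0(t,\bbx^*)\ge -K$, the same per-edge estimate $\int_0^T\|\bbx_p(t)-\bbx_q(t)\|^2\,dt\le (K+\gamma)T+\tfrac{1}{2\varepsilon}\left(1+\|\bbx^*-\bbx(0)\|^2\right)$, and the same triangle-inequality argument along a shortest path to pick up the diameter $D$.

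The one place worth flagging is the step you set aside as routine bookkeeping. The paper passes from the squared-disagreement bound to a bound on $\int_0^T\|\bbx_i(t)-\bbx_j(t)\|\,dt$ by invoking Jensen in the form $\left(\int_0^T g\,dt\right)^2\le\int_0^T g^2\,dt$; you invoke Cauchy--Schwarz. The correct form of either inequality on $[0,T]$ is $\left(\int_0^T g\,dt\right)^2\le T\int_0^T g^2\,dt$: there is a factor of $T$ that the paper's displayed inequality omits (as written it is valid only for $T\le 1$) and that an honest application of Cauchy--Schwarz necessarily introduces. With that factor the per-edge bound becomes $\int_0^T\|\bbx_p-\bbx_q\|\,dt\le\sqrt{T\left[(K+\gamma)T+C\right]}=O(T)$ rather than the $O(\sqrt{T})$ stated in the proposition. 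So your argument reproduces the paper's, including its weakest link; your claim that Cauchy--Schwarz ``produces the square root, giving the claim'' glosses over exactly the same missing $\sqrt{T}$ that the paper's Jensen step hides. If you want the stated rate, this step needs a different mechanism (e.g., a pointwise rather than integrated control of the disagreement, or a reworked relaxation of the consensus constraint as hinted at in Remark~2), not just Jensen or Cauchy--Schwarz applied to the accumulated squared disagreement.
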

\begin{proof}
  Let us consider the expression \eqref{eqn_key_lemma} with $\tilde{\bbx}= \bbx^*$, the solution of the problem \eqref{eqn_optimization_problem}, $\tilde{\bm{\lambda}} = \bm{0}$, and $\tilde{{\mu}}_{ij} = 1$ for some $i\in\ccalV$ and $j\in\ccalN_i$ and $\tilde{\bm{\mu}}_{ik} = {0}$ for all $k\neq j$ and $\tilde{\bm{\mu}_{l}} = \bm{0}$ for all $l\neq i$. For this selection of variables the Lagrangian evaluated at $(t,\bbx(t),\tilde{\bblambda},\tilde{\bbmu})$ yields
  \begin{equation}
\ccalL(t,\bbx(t),\tilde{\bblambda},\tilde{\bbmu})=f_0(\bbx(t)) +\left(\left\|\bbx_i(t) - \bbx_j(t)\right\|^2-\gamma\right). 
    \end{equation}
  Applying Lemma \ref{lemma_key_lemma} for this particular choice of $\tilde{\bbx},\tilde{\bblambda}$ and $\tilde{\bbmu}$, \eqref{eqn_key_lemma} reduces to
  \begin{equation}
  \begin{split}
    \int_0^T f_0(\bbx(t)) +\left(\left\|\bbx_i(t) - \bbx_j(t)\right\|^2-\gamma\right)\, dt \\
    -\int_0^T\left(f_0(\bbx^*) +\bm{\lambda}(t)^\top f(\bbx^*)\right)\,dt\\
    - \int_0^T \sum_{i=1}^N\sum_{j\in\ccalN_i}{{\mu}}_{ij}(t)\left(\left\|\bbx^*_i-\bbx_j^*\right\|^2-\gamma\right) \, dt\\
    \leq  \frac{{V}_{\tilde{\bbx},\tilde{\bm{\lambda}},\tilde{\bm{\mu}}}(\bbx(0),\bm{\lambda}(0),\bm{\mu}(0))}{\varepsilon}.
  \end{split}
    \end{equation}
  Since $\bbx^*$ is the solution to \eqref{eqn_consensus_problem} it holds that $\bbx^*_i =\bbx_j^*$ and that $f(\bbx^*) \preceq \bm{0}$. Since $\bm{\lambda}(t)$ and $\bm{\mu}(t)$ are always in the positive orthant, due to the projection in their update (c.f. \eqref{eqn_gradient_ascent_lambda} and \eqref{eqn_gradient_ascent_mu}), we have that $\bm{\lambda}(t)^\top f(\bbx^*) \leq 0 $ and that ${\mu}_{ij}(t) \left(\left\|\bbx_i^*-\bbx_j^* \right\|^2-\gamma\right) =-\mu_{ij}(t)\gamma \leq 0$ for all $t\in[0,T]$. These observations imply that 
 \begin{equation}\label{eqn_disagreement_1}
  \begin{split}
    \int_0^T \left(f_0(\bbx(t))-f_0(\bbx^*) + \left\|\bbx_i(t) - \bbx_j(t)\right\|^2-\gamma\right)\,dt \\
    \leq  \frac{{V}_{\tilde{\bbx},\tilde{\bm{\lambda}},\tilde{\bm{\mu}}}(\bbx(0),\bm{\lambda}(0),\bm{\mu}(0))}{\varepsilon}.
\end{split}.
 \end{equation}
 From the definition of a minimum and Assumption \ref{assumption_bound} it follows that 
 \begin{equation}\label{eqn_bound_objective_function}
f_0(t,\bbx(t))-f_0(t,\bbx^*) \geq \min_{x\in \ccalX^N} f_0(t,\bbx)-f_0(t,\bbx^*) \geq -K.
 \end{equation}
Substituting \eqref{eqn_bound_objective_function} into \eqref{eqn_disagreement_1} yields 
\begin{equation}
  \begin{split}
    \int_0^T -(K+\gamma) + \left\|\bbx_i(t) - \bbx_j(t)\right\|^2 \,dt \\
    \leq  \frac{{V}_{\tilde{\bbx},\tilde{\bm{\lambda}},\tilde{\bm{\mu}}}(\bbx(0),\bm{\lambda}(0),\bm{\mu}(0))}{\varepsilon}.
  \end{split}
    \end{equation}
    Rearranging the terms in the previous expression it holds that
 \begin{equation}\label{eqn_bound_disagreement_3}
   \begin{split}
\int_0^T\left\|\bbx_i(t) - \bbx_j(t)\right\|^2 \,dt  
\leq  (K+\gamma)T\\
+\frac{{V}_{\tilde{\bbx},\tilde{\bm{\lambda}},\tilde{\bm{\mu}}}(\bbx(0),\bm{\lambda}(0),\bm{\mu}(0))}{\varepsilon}.
    \end{split}
 \end{equation}
Because the square function is convex, by virtue of Jensen's inequality we have that  
\begin{equation}
       \left(\int_0^T \left\|\bbx_i(t) - \bbx_j(t)\right\|\, dt\right)^2 \leq \int_0^T\left\|\bbx_i(t) - \bbx_j(t)\right\|^2 \,dt.  
\end{equation} 
The previous inequality provides a lower bound for \eqref{eqn_bound_disagreement_3}, hence we have that 
   \begin{equation}
     \begin{split}
       \left(\int_0^T \left\|\bbx_i(t) - \bbx_j(t)\right\|\, dt\right)^2 &\leq (K+\delta)T\\
       &+\frac{{V}_{\tilde{\bbx},\tilde{\bm{\lambda}},\tilde{\bm{\mu}}}(\bbx(0),\bm{\lambda}(0),\bm{\mu}(0))}{\varepsilon}.
\end{split}
     \end{equation}
By taking the square root of the previous inequality we observe that the disagreement among neighbors is sublinear. In particular, by evaluating ${V}_{\tilde{\bbx},\tilde{\bm{\lambda}},\tilde{\bm{\mu}}}(\bbx(0),\bm{\lambda}(0),\bm{\mu}(0))$ for the selection of $\tilde{\bbx}$, $\tilde{\bm{\lambda}}$ and $\tilde{\bm{\mu}}$ done at the begining of the proof and for the initial conditions $\bm{\lambda}(0)=\bm{0}$, $\bm{\mu}(0)=\bm{0}$ yields
   \begin{equation}
     \begin{split}
 &\int_0^T \left\|\bbx_i(t) - \bbx_j(t)\right\|\, dt  \\
      & \leq\sqrt{(K+\delta)T+\frac{1}{2\varepsilon}\left(1+\left\|\bbx^*-\bbx(0)\right\|^2\right)}.
\end{split}
     \end{equation}
   The latter establishes a sublinear disagreement among one hop neighbors on the network. To show that the result holds for every pair of nodes use the triangular inequality and the fact that the diameter of the network is $D$ (Assumption \ref{assumption_network}). 
\end{proof}
The sublinear network disagreement that the previous proposition establishes, along with the result of Lemma \ref{lemma_key_lemma}, allows us to prove that the local Fit and Regret are bounded by sublinear functions with respect to the time horizon $T$. The latter means that the trajectories that arise from the Distributed Online Saddle Point Dynamics \eqref{eqn_gradient_descent}--\eqref{eqn_gradient_ascent} are feasible and optimal in the sense of definition \ref{def_feasibility_optimality}. We formalize these results in theorems \ref{theo_main} and \ref{theo_regret} respectively. 
%
\begin{theorem}[\bf Feasibility]\label{theo_main}
Let Assumptions \ref{assumption_network}--\ref{assumption_bound} hold. Then for any $T\geq 0$ the solutions of the dynamical system \eqref{eqn_gradient_descent}--\eqref{eqn_gradient_ascent}, with $\varepsilon>1/2$, are such that the $k$-th component of the local fit $\ccalF_{T j}^{i}$ for any $i,j\in\ccalV$ is bounded by
    %
$\left(\ccalF_{T j}^{i}\right)_k \leq O(\sqrt{T})$.
    %
\end{theorem}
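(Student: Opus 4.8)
The plan is to control the $k$-th component of the local fit, which for the feasible reference of Definition~\ref{def_feasibility_optimality} is the accumulated constraint $\int_0^T f_{jk}(t,\bbx_i(t))\,dt$, by first transferring it onto agent $j$'s \emph{own} trajectory and then invoking Lemma~\ref{lemma_key_lemma}. The transfer is immediate from the Lipschitz continuity of the constraints (Assumption~\ref{assumption_convexity}): writing
\begin{equation*}
\int_0^T f_{jk}(t,\bbx_i(t))\,dt = \int_0^T f_{jk}(t,\bbx_j(t))\,dt + \int_0^T\big(f_{jk}(t,\bbx_i(t))-f_{jk}(t,\bbx_j(t))\big)\,dt,
\end{equation*}
the last integral is at most $L_f\int_0^T\|\bbx_i(t)-\bbx_j(t)\|\,dt$, which Proposition~\ref{proposition_disagreement} already bounds by $O(\sqrt T)$. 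Hence it suffices to show $\int_0^T f_{jk}(t,\bbx_j(t))\,dt=O(\sqrt T)$.

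For the latter I would apply Lemma~\ref{lemma_key_lemma} with test point $\tbx=\bbx^*$, $\tbmu=0$, and $\tblambda$ supported on the single coordinate $(j,k)$ with value $a>0$ to be chosen. With this choice $\ccalL(t,\bbx(t),\tblambda,\tbmu)=f_0(t,\bbx(t))+a\,f_{jk}(t,\bbx_j(t))$, while feasibility of $\bbx^*$ (Assumption~\ref{assumption_viability}, together with $\bbx_i^*=\bbx_j^*$ so that each proximity slack equals $-\gamma\le 0$) and the nonnegativity of $\bblambda(t),\bbmu(t)$ enforced by the projected ascent give $\ccalL(t,\bbx^*,\bblambda(t),\bbmu(t))\le f_0(t,\bbx^*)$. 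Substituting into \eqref{eqn_key_lemma} with $\bblambda(0)=\bbmu(0)=0$, so that the energy reduces to $V(0)=\tfrac12(\|\bbx(0)-\bbx^*\|^2+a^2)$, yields
\begin{equation*}
a\int_0^T f_{jk}(t,\bbx_j(t))\,dt \le \frac{\|\bbx(0)-\bbx^*\|^2+a^2}{2\varepsilon} - \int_0^T\big(f_0(t,\bbx(t))-f_0(t,\bbx^*)\big)\,dt.
\end{equation*}

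The cost gap is bounded below exactly as in \eqref{eqn_bound_objective_function}: by the definition of the minimum and Assumption~\ref{assumption_bound}, $f_0(t,\bbx(t))-f_0(t,\bbx^*)\ge -K$, so the subtracted integral is at least $-KT$. Dividing by $a$ leaves
\begin{equation*}
\int_0^T f_{jk}(t,\bbx_j(t))\,dt \le \frac{KT}{a} + \frac{\|\bbx(0)-\bbx^*\|^2}{2\varepsilon a} + \frac{a}{2\varepsilon}.
\end{equation*}
Choosing the free multiplier $a$ to scale like $\sqrt T$ (the balancing choice $a=\sqrt{2\varepsilon K T + \varepsilon^{-1}\|\bbx(0)-\bbx^*\|^2}$ is optimal) collapses the right-hand side to $O(\sqrt T)$; it is here that the hypothesis $\varepsilon>1/2$ enters, making $\tfrac1{2\varepsilon}<1$ and thereby keeping the coefficient of $\sqrt T$ a clean constant. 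Combining with the disagreement estimate of the first paragraph gives $(\ccalF_{Tj}^i)_k=\int_0^T f_{jk}(t,\bbx_i(t))\,dt\le O(\sqrt T)$, as claimed.

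The step I expect to be the main obstacle is the second one, and for a structural reason: the multiplier dynamics for constraint $f_j$ are driven by $\bbx_j$ rather than $\bbx_i$, so Lemma~\ref{lemma_key_lemma} only controls the constraint along agent $j$'s trajectory, and the passage back to agent $i$ must be routed entirely through Proposition~\ref{proposition_disagreement}. Equally delicate is the need to let the test multiplier $a$ grow with the horizon: for fixed $a$ the term $KT/a$ is linear in $T$, and only by taking $a\propto\sqrt T$ does the apparently linear estimate become sublinear. Both devices mirror the proof of Proposition~\ref{proposition_disagreement}, where a unit test multiplier on a single proximity constraint was used in the same way.
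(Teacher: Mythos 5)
Your proof is correct and follows the paper's overall architecture: split $(\ccalF_{Tj}^i)_k$ into the fit along the owning agent's trajectory plus a Lipschitz-times-disagreement correction handled by Proposition \ref{proposition_disagreement}, then control the first piece by evaluating Lemma \ref{lemma_key_lemma} at $\tbx=\bbx^*$, $\tbmu=0$, killing the $-\ccalL(t,\bbx^*,\bblambda(t),\bbmu(t))$ terms via feasibility of $\bbx^*$ and nonnegativity of the multipliers, and lower-bounding the cost gap by $-K$ via Assumption \ref{assumption_bound}. The one genuine divergence is the choice of test multiplier. The paper sets $\tblambda_i=[\ccalF_{Ti}^i]^+$ and $\tblambda_j=0$ for $j\neq i$, which makes the left-hand side of \eqref{eqn_key_eq_appendix} equal to $\|[\ccalF_{Ti}^i]^+\|^2$ while the same quantity reappears inside the energy $V$ on the right; absorbing it requires $1-\tfrac{1}{2\varepsilon}>0$, and this is precisely where the hypothesis $\varepsilon>1/2$ is consumed. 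You instead place a free scalar $a$ on a single coordinate and optimize $a\propto\sqrt{T}$ at the end. Both yield $O(\sqrt{T})$, but your route has the (mild) advantage of working for every $\varepsilon>0$ and giving a per-component bound directly, at the cost of an extra balancing step.

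One inaccuracy worth fixing: you claim that $\varepsilon>1/2$ enters your argument through the coefficient $\tfrac{a}{2\varepsilon}$. It does not. After dividing by $a$ and choosing $a\asymp\sqrt{T}$, the right-hand side is $O(\sqrt{T})$ for any $\varepsilon>0$; the constant merely degrades as $\varepsilon\to 0$. The condition $\varepsilon>1/2$ is an artifact of the paper's self-referential choice of $\tblambda$, and your proof simply never needs it. You should either drop that sentence or state explicitly that the hypothesis is not used in your version.
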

\begin{proof}
  We evaluate the expression \eqref{eqn_key_lemma} for the particular choice of $\tilde{\bbx}=\bbx^*$ and $\tilde{\bbmu}=0$
  \begin{equation}\label{eqn_key_eq_appendix_pre}
    \begin{split}
      \int_0^Tf_0(t,\bbx(t)) - f_0(t,\bbx^*)dt \\
      \sum_{i=1}^N \int_0^T\left[\tilde{\bm{\lambda}}_i^\top f_i(t,\bbx_i(t))-\bm{\lambda}_i^\top(t)f_i(t,{\bbx}^*_i)\right] \,dt\\
      -\sum_{i=1}^N \sum_{j\in \ccalN_i} \int_0^T\mu_{ij}(t)\left(\left\|{\bbx}^*_i-{\bbx}^*_j\right\|^2 -\gamma\right) \, dt  \\ 
      \leq \frac{1}{\varepsilon}V_{{\bbx}^*,\tilde{\bm{\lambda}},{\bm{0}}}(\bbx(0),{\bm{\lambda}}(0),\bm{\mu}(0)).
\end{split}
    \end{equation}
  By virtue of Assumption \ref{assumption_bound} and the bound derived in \eqref{eqn_bound_objective_function}, we can lower bound the difference of the integrals of the objective functions by 
  \begin{equation}
\int_0^Tf_0(t,\bbx(t)) - f_0(t,\bbx^*)dt \geq -\int_0^T Kdt = -KT.
\end{equation}
  Substituting the previous bound in \eqref{eqn_key_eq_appendix_pre} yields 
\begin{equation}\label{eqn_key_eq_appendix}
\begin{split}
      \sum_{i=1}^N \int_0^T\left[\tilde{\bm{\lambda}}_i^\top f_i(t,\bbx_i(t))-\bm{\lambda}_i^\top(t)f_i(t,{\bbx}^*_i)\right] \,dt\\
      -\sum_{i=1}^N \sum_{j\in \ccalN_i} \int_0^T\mu_{ij}(t)\left(\left\|{\bbx}^*_i-{\bbx}^*_j\right\|^2 -\gamma\right) \, dt  \\ 
      \leq \frac{1}{\varepsilon}V_{{\bbx}^*,\tilde{\bm{\lambda}},{\bm{0}}}(\bbx(0),{\bm{\lambda}}(0),\bm{\mu}(0)) + KT.
\end{split}
    \end{equation}
Observe that, since $\bbx^*$ is the solution of the decentralized problem \eqref{eqn_consensus_problem},  we have that $f_i(t,{\bbx}^*_i) \preceq \bm{0}$. Moreover, because the Lagrange Multipliers $\bm{\lambda}_i(t)$ are in the positive orthant (cf., \eqref{eqn_gradient_ascent_lambda}) the product $-\bm{\lambda}_i^\top(t)f_i(t,{\bbx}_i^*)$ is always positive. Likewise, the product $\bbmu_{ij}(t)\left(\gamma-\left\|\bbx_i^*-\bbx_j^*\right\|^2\right)=\bbmu_{ij}(t)\gamma \geq 0$, for all $t\geq 0$. With these considerations the following bound holds 
\begin{equation}\label{eqn_key_eq_appendix}
\begin{split}
      \sum_{i=1}^N \int_0^T\tilde{\bm{\lambda}}_i^\top f_i(t,\bbx_i(t)) \,dt
      \leq \frac{1}{\varepsilon}V_{\tilde{\bbx},\tilde{\bm{\lambda}},\tilde{\bm{\mu}}}(\bbx(0),{\bm{\lambda}}(0),\bm{\mu}(0)) + KT.
\end{split}
    \end{equation}
  Denote by  $\left[\cdot \right]^+$ the projection over the positive orthant. Then, for a particular $i\in\ccalV$ choose $\tilde{\bm{\lambda}}_i = \left[\ccalF_{T i}^{i} \right]^+$ and $\tilde{\bm{\lambda}}_j= \bm{0}$ for all $j\in\ccalV$ such that $j\neq i$. Then, \eqref{eqn_key_eq_appendix} reduces to 
\begin{equation}\label{eqn_first_fit_bound}
\begin{split}
      \left\| \left[\ccalF_{Ti}^{i}\right]^+\right\|^2
      \leq \frac{1}{\varepsilon}V_{{\bbx}^*,{\tilde{\bm{\lambda}}},{\bm{0}}}(\bbx(0),{\bm{\lambda}}(0),\bm{\mu}(0)) + KT.
\end{split}
    \end{equation}
  Without loss of generality assume $\bm{\lambda}(0) = 0$ and $\bm{\mu}(0)=0$. Then, the right hand side of the previous expression reduces to
  \begin{equation}
    V_{{\bbx}^*,{\tilde{\bm{\lambda}}},{\bm{0}}}(\bbx(0),{\bm{\lambda}}(0),\bm{\mu}(0))= \frac{\left\|\bbx(0)-\bbx^*\right\|^2+      \left\| \left[\ccalF_{Ti}^{i}\right]^+\right\|^2}{2},
  \end{equation}
  and \eqref{eqn_first_fit_bound} can be written as
  \begin{equation}
\left(1-\frac{1}{2\varepsilon}\right)\left\| \left[\ccalF_{Ti}^{i}\right]^+\right\|^2
      \leq \frac{1}{2\varepsilon}\left(\left\|\bbx(0)-\bbx^*\right\|^2\right) + KT.
  \end{equation}
  Then, for any $\varepsilon>1/2$, the previous expression yields
  \begin{equation}
    \left\| \left[\ccalF_{Ti}^{i}\right]^+\right\|^2 \leq \frac{\left(\left\|\bbx(0)-\bbx^*\right\|^2+2\varepsilon KT \right)}{2\varepsilon-1}.
    \end{equation}
  Taking the square root of both sides of the previous inequality shows that the norm of the projection of the fit is bounded by a sublinear function. In particular we have that each component $k=1\ldots m_i$ of $\ccalF_{Ti}^{i}$ is also upper bounded by the a sublinear function that grows as $O(\sqrt{T})$. We are left to show that for any $j \neq i$ we have fit that is bounded by a function of the order of $\sqrt{T}$. The latter is a consequence of the Liptchitz continuity and the sublinear network disagreement as we show next. Add and subtract $f_{i,k}(t,\bbx_i(t))$ to the definition of the local fit to write
\begin{equation}
  \begin{split}
    \left(\ccalF_{Tj}^{i}\right)_k = \int_0^T f_{i,k}(t,\bbx_j(t)) dt    = \int_0^T f_{i,k}(t,\bbx_i(t)) dt \\
    + \int_0^T f_{i,k}(t,\bbx_j(t)) -f_{i,k}(t,\bbx_i(t))dt.
    \end{split}
\end{equation}
The first term on the right hand side of the previous expression is, by definition, $\left(\ccalF_{Ti}^{i}\right)_k$. This term is bounded by a function of the order as $\sqrt{T}$ as previously shown. On the other hand, the second integral can be bounded using the Liptchitz continuity of $f_{i,k}(t,\bbx)$ (c.f. Assumption \ref{assumption_convexity}) by 
\begin{equation}
  \begin{split}
  \left|\int_0^T f_{i,k}(t,\bbx_j(t)) -f_{i,k}(t,\bbx_i(t))dt\right| \\
  \leq \int_0^TL_f\|\bbx_i(t)-\bbx_j(t)\|dt 
\end{split}
  \end{equation}
Then, the result of Proposition \ref{proposition_disagreement} completes the proof.
  \end{proof}
%
The previous theorem establishes that the local fit achieved by a system that follows saddle point dynamics \eqref{eqn_gradient_descent}--\eqref{eqn_gradient_ascent} is bounded by a function whose rate of growth is sublinear, thus suggesting vanishing penalties. However, as discussed previously this can be achieved by solutions that oscillate, i.e., trajectories that violate the constraints at some times and that satisfy them with slack at other periods. Since this is not desirable in some applications, we overcome this limitation by showing that the saturated global fit has the same property. We address this in Section \ref{sec_saturated_fit}.  The fact that the fit grows sublinearly is equivalent to achieving trajectories that are feasible in the sense of Definition \ref{def_feasibility_optimality}. In the next theorem we establish that the local regrets are also bounded and thus, that the saddle point dynamics give origin to trajectories that are also optimal. 
\begin{theorem}[\bf Optimality]\label{theo_regret}
Let Assumptions \ref{assumption_network}--\ref{assumption_bound} hold. Then for any $T\geq 0$ the solutions of the dynamical system \eqref{eqn_gradient_descent},\eqref{eqn_gradient_ascent_lambda} and \eqref{eqn_gradient_ascent_mu} are such that the local regret $\ccalR_T^{i}$ for any $i\in\ccalV$ is bounded by a function of $O(\sqrt{T})$. In particular, for $\bblambda(0) =0$ and $\bbmu(0) =0$ we have that 
\begin{equation}\label{eqn_bound_regret}
  \begin{split}
    &\ccalR_T^{i} \leq \frac{\left(1+\left\|\bbx^*-\bbx(0)\right\|^2\right)}{\varepsilon}\\
   & +(N-1)L_0 D \sqrt{(K+\gamma)T +\frac{1}{2\varepsilon}\left(1+\left\|\bbx^*-\bbx(0)\right\|^2\right)}
    \end{split}
      \end{equation}
\end{theorem}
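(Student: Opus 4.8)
The plan is to apply the key inequality \eqref{eqn_key_lemma} of Lemma \ref{lemma_key_lemma} with the clairvoyant action as the reference primal point and with vanishing reference multipliers, namely $\tilde{\bbx}=\bbx^*$, $\tilde{\bblambda}=\bm{0}$ and $\tilde{\bbmu}=\bm{0}$. With zero reference multipliers the left-hand Lagrangian collapses to the cost, $\ccalL(t,\bbx(t),\bm{0},\bm{0})=f_0(t,\bbx(t))$, while the right-hand Lagrangian is $\ccalL(t,\bbx^*,\bblambda(t),\bbmu(t))=f_0(t,\bbx^*)+\sum_i \bblambda_i(t)^\top f_i(t,\bbx_i^*)+\sum_i\sum_{j\in\ccalN_i}\mu_{ij}(t)\big(\|\bbx_i^*-\bbx_j^*\|^2-\gamma\big)$. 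Since $\bbx^*$ solves \eqref{eqn_consensus_problem} we have $f_i(t,\bbx_i^*)\preceq \bm{0}$ and $\|\bbx_i^*-\bbx_j^*\|^2-\gamma=-\gamma<0$, and because the projected ascent keeps $\bblambda(t),\bbmu(t)\succeq \bm{0}$ (cf.\ \eqref{eqn_gradient_ascent_lambda}--\eqref{eqn_gradient_ascent_mu}), both correction sums are nonpositive. Hence $\ccalL(t,\bbx^*,\bblambda(t),\bbmu(t))\le f_0(t,\bbx^*)$, and \eqref{eqn_key_lemma} yields $\int_0^T f_0(t,\bbx(t))-f_0(t,\bbx^*)\,dt \le V_{\bbx^*,\bm{0},\bm{0}}(\bbx(0),\bm{0},\bm{0})/\varepsilon = \|\bbx(0)-\bbx^*\|^2/(2\varepsilon)$, which I would loosen to $(1+\|\bbx^*-\bbx(0)\|^2)/\varepsilon$ to match the stated form. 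Note that choosing $\tilde{\bblambda}=\bm{0}$ avoids introducing any quadratic fit term, so no restriction $\varepsilon>1/2$ is required here.

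The main subtlety is that the quantity just bounded is the \emph{stacked} cost $f_0(t,\bbx(t))=\sum_{j} f_{0j}(t,\bbx_j(t))$, in which each agent's own action is fed to its own cost, whereas the regret $\ccalR_T^i$ measures $\int_0^T f_0(t,\bbx_i(t))\,dt=\int_0^T\sum_{j} f_{0j}(t,\bbx_i(t))\,dt$, in which the single trajectory $\bbx_i(t)$ is fed to \emph{all} costs. I would bridge the two by adding and subtracting, writing $\ccalR_T^i = \int_0^T\big(f_0(t,\bbx(t))-f_0(t,\bbx^*)\big)dt + \int_0^T\sum_{j\in\ccalV}\big(f_{0j}(t,\bbx_i(t))-f_{0j}(t,\bbx_j(t))\big)dt$. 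The first term is exactly the stacked regret bounded in the previous paragraph; the second is a pure disagreement correction.

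Finally I would bound that correction. By the Lipschitz continuity of each $f_{0j}$ (Assumption \ref{assumption_convexity}, constant $L_0$), the $j=i$ summand vanishes and each of the remaining $N-1$ summands satisfies $|f_{0j}(t,\bbx_i(t))-f_{0j}(t,\bbx_j(t))|\le L_0\|\bbx_i(t)-\bbx_j(t)\|$; integrating and invoking the sublinear disagreement bound of Proposition \ref{proposition_disagreement} for each node pair gives $(N-1)L_0 D\sqrt{(K+\gamma)T+\frac{1}{2\varepsilon}(1+\|\bbx^*-\bbx(0)\|^2)}$. Summing the two contributions produces exactly \eqref{eqn_bound_regret}. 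The hardest part is the reconciliation in the second paragraph: one must recognize that the Lagrangian argument controls only the stacked cost and that the gap to the per-agent regret is entirely a consensus error, so that the whole optimality result leans on the $O(\sqrt{T})$ disagreement estimate of Proposition \ref{proposition_disagreement} rather than on any genuinely new inequality.
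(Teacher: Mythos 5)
Your proposal is correct and follows essentially the same route as the paper's proof: the same instantiation of Lemma \ref{lemma_key_lemma} with $\tilde{\bbx}=\bbx^*$, $\tilde{\bblambda}=\bm{0}$, $\tilde{\bbmu}=\bm{0}$ to control the stacked cost, the same add-and-subtract decomposition separating the stacked regret from the consensus error, and the same Lipschitz-plus-Proposition-\ref{proposition_disagreement} bound on the correction term. Your observation that the raw constant is $\|\bbx(0)-\bbx^*\|^2/(2\varepsilon)$ and only gets loosened to $(1+\|\bbx^*-\bbx(0)\|^2)/\varepsilon$ to match the stated form is a slight sharpening of the paper's bookkeeping, but the argument is the same.
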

\begin{proof}
  Let us consider the local regret of agent $j$, with $j\in\ccalV$, defined in \eqref{eqn_optimality}
  \begin{equation}
\ccalR_T^j = \int_0^T \sum_{i=1}^N \left(f_0^i(t,\bbx_j(t)) - f_0^i(t,\bbx^*_i)\right) \, dt. 
    \end{equation}
  Add and subtract $\sum_{i=1, i\neq j}^Nf_0^i(t,\bbx_i(t))$ to the previous equation and rewrite the previous expression as
  \begin{equation}\label{eqn_first_regret_bound}
    \begin{split}
    \ccalR_T^j &= \int_0^T f_0(t,\bbx(t)) - f_0(t,\bbx^*) \, dt \\
    &+\int_0^T \sum_{i=1, i\neq j}^N f_0^i(t,\bbx_j(t)) - f_0^i(t,\bbx_i(t)) \, dt.
    \end{split}
    \end{equation}
  From Lemma \ref{lemma_key_lemma} and by choosing $\tilde{\bbx}=\bbx^*,\tilde{\bm{\lambda}}=0$, $\tilde{\bm{\mu}}=0$ it follows that
  \begin{equation}\begin{split}
    \int_0^T \sum_{i=1}^N \left(f_0^i(t,\bbx_i(t)) - f_0^i(t,\bbx^*_i) \right)\, dt  \\
     -\sum_{i=1}^N \int_0^T\bm{\lambda}_i^\top(t)f_i(t,{\bbx}^*_i) \,dt\\
      -\sum_{i=1}^N \sum_{j\in \ccalN_i} \int_0^T\bbmu_{ij}(t)\left(\left\|{\bbx}^*_i-{\bbx}^*_j\right\|^2 -\gamma\right) \, dt  \\ 
      \leq \frac{1}{\varepsilon}V_{\bbx^*,\bm{0},\bm{0}}\left(\bbx(0),\bm{\lambda}(0), \bm{\mu}(0)\right).
      \end{split}
    \end{equation}
  As it was previously argued, because $\bbx^*$ is the solution of \eqref{eqn_consensus_problem} it follows that $f_i(\bbx^*)\preceq 0$ and that $\bbx_i^*=\bbx_j^*$. Combining these observations with the fact that the multipliers always lie in the positive orthants due to the projections introduced in their updates \eqref{eqn_gradient_ascent}, we have that
$-\bblambda_i(t)^\top f_i(t,\bbx_i^*)\geq 0 $ and $\bbmu_{ij}(t)\left(\left\|\bbx_i^*-\bbx_j^*\right\|^2-\gamma\right)\geq 0$. 
  Thus it follows that
    \begin{equation}\begin{split}
    \int_0^T \sum_{i=1}^N f_0^i(t,\bbx_i(t)) - f_0^i(t,\bbx^*_i) \, dt 
      \leq \frac{1}{\varepsilon}V_{\bbx^*,\bm{0},\bm{0}}\left(\bbx(0),\bm{\lambda}(0), \bm{\mu}(0)\right).
      \end{split}
    \end{equation}
    Notice that the difference in the left hand side of the previous expression is equal to the first term in \eqref{eqn_first_regret_bound}. Thus, the local regret in \eqref{eqn_first_regret_bound} can be upper bounded by 
  \begin{equation}\label{eqn_second_regret_bound}
    \begin{split}
    \ccalR_T^j &\leq \frac{1}{\varepsilon}V_{\bbx^*,\bm{0},\bm{0}}\left(\bbx(0),\bm{\lambda}(0), \bm{\mu}(0)\right)\\ 
    &+\int_0^T \sum_{i=1, i\neq j}^N f_0^i(t,\bbx_j(t)) - f_0^i(t,\bbx_i(t)) \, dt.
    \end{split}
    \end{equation}
  To bound the second term in the previous expression, use the Lipschitz continuity of the objective function (c.f. Assumption \ref{assumption_convexity})
  \begin{equation}
    \begin{split}
      \left|\int_0^T \sum_{i=1, i\neq j}^N f_0^i(t,\bbx_j(t)) - f_0^i(t,\bbx_i(t)) \, dt\right| \\
      \leq \sum_{i=1,i\neq j}^N \int_0^T L_0\left\|\bbx_j(t)-\bbx_i(t)\right\| \, dt.
\end{split}
  \end{equation}
  The latter is bounded by a function of the order of $\sqrt{T}$ as a consequence of the sublinear network disagreement established in Proposition \ref{proposition_disagreement}. Since $V_{\bbx^*,\bm{0},\bm{0}}\left(\bbx(0),\bm{\lambda}(0), \bm{\mu}(0)\right)/\varepsilon$ is a constant, it holds that \eqref{eqn_second_regret_bound} is upper bounded by a function of the order of $\sqrt{T}$. To complete the proof of the bound in \eqref{eqn_bound_regret} it suffices to replace in the previous expression the network disagreement by the result of Proposition \ref{proposition_disagreement}.
  \end{proof}
We have established that agents operating distributedly achieve sublinear network disagreement, fit and regret. In the next section we discuss the case where the constraint is lower bounded and therefore it cannot be satisfied with slack. This prevents the fit to be bounded due to trajectories that alternate between feasibility and large periods of infeasibility.

\subsection{Saturated Fit}\label{sec_saturated_fit}
We start by definig a saturated function to prevent the constraint to take negative values smaller than a given threshold. Formally, let $\delta>0$ and define the function $\tilde{f}_\delta(t,\bbx) = \max\left\{f(t,\bbx),-\delta \right\}$. Then we can define the notion of saturated local fit as 
\begin{equation}
\tilde{\ccalF}^{ij}_T = \int_0^T \tilde{f}_{\delta,i}(t,\bbx_j(t))\,dt,
\end{equation}
By taking small values of $\delta$ we can arbitrarily reduce the negative portion of the fit. Ideally one would like to set $\delta=0$.
 We next establish that the sublinear bound for the fit established in Theorem \ref{theo_main} holds as well for the saturated fit.%
%
\begin{corollary}\label{coro_feasibility}
    Let Assumptions \ref{assumption_network}--\ref{assumption_bound} hold. Then for any $T\geq 0$ the solutions of the dynamical system \eqref{eqn_gradient_descent}--\eqref{eqn_gradient_ascent}, with $\varepsilon>1/2$, are such that the $k$-th component of the saturated fit $\tilde{\ccalF}_{\delta,Tj}^{i}$ for any $i,j\in\ccalV$ is bounded by
    %
$\left(\tilde{\ccalF}_{\delta,Tj}^{i}\right)_k \leq O(\sqrt{T})$.
\end{corollary}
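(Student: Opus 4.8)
The plan is to reduce the saturated fit bound to the ordinary fit bound of Theorem~\ref{theo_main}, exploiting the fact that saturation can only \emph{increase} the integrand relative to allowing arbitrarily negative slacks. First I would recall the definition $\tilde{f}_{\delta,i}(t,\bbx) = \max\{f_i(t,\bbx),-\delta\}$ applied componentwise, so that for every $k$ and every $t$ we have the pointwise lower bound $f_{i,k}(t,\bbx) \leq \tilde{f}_{\delta,i,k}(t,\bbx)$ as well as the pointwise decomposition $\tilde{f}_{\delta,i,k}(t,\bbx) = f_{i,k}(t,\bbx) + \big(\tilde{f}_{\delta,i,k}(t,\bbx) - f_{i,k}(t,\bbx)\big)$, where the correction term is nonnegative and bounded above by $\max\{0,-\delta-f_{i,k}(t,\bbx)\}$, i.e. it is only nonzero where the true constraint is more negative than $-\delta$.

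The key step is to split the saturated fit into the portion where $f_{i,k}(t,\bbx_j(t)) \geq -\delta$ (where $\tilde{f}_{\delta,i,k}$ equals $f_{i,k}$) and the portion where $f_{i,k}(t,\bbx_j(t)) < -\delta$ (where $\tilde{f}_{\delta,i,k} = -\delta$). On the first set the saturated integrand coincides with the unsaturated one; on the second set the saturated integrand equals the constant $-\delta$, which is an \emph{upper} bound for the true $f_{i,k}$ there. Consequently
\begin{equation}
\big(\tilde{\ccalF}_{\delta,Tj}^{i}\big)_k = \int_0^T \tilde{f}_{\delta,i,k}(t,\bbx_j(t))\,dt \leq \int_0^T \big[ f_{i,k}(t,\bbx_j(t)) \big]^+ dt + \text{(bounded terms)}.
\end{equation}
More carefully, since $\tilde{f}_{\delta,i,k} \le \max\{f_{i,k}, 0\} = [f_{i,k}]^+$ whenever $\delta \ge 0$ fails to help, the cleanest route is to observe $\tilde{f}_{\delta,i,k}(t,\bbx) \leq [f_{i,k}(t,\bbx)]^+$ does not hold in general because of the $-\delta$ floor being larger than a very negative value; instead I would bound $\tilde{f}_{\delta,i,k}(t,\bbx_j(t)) \leq f_{i,k}(t,\bbx_j(t))$ fails on the saturation set, so the correct upper bound is $\tilde{f}_{\delta,i,k} \leq \max\{f_{i,k},-\delta\} \le [f_{i,k}]^+ $ when $f_{i,k}\ge -\delta$, and $\le -\delta \le 0 \le [f_{i,k}]^+$ otherwise; hence uniformly $\tilde{f}_{\delta,i,k}(t,\bbx_j(t)) \leq [f_{i,k}(t,\bbx_j(t))]^+$.

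Therefore $\big(\tilde{\ccalF}_{\delta,Tj}^{i}\big)_k \leq \int_0^T [f_{i,k}(t,\bbx_j(t))]^+\,dt$, and the right-hand side is controlled exactly as in the proof of Theorem~\ref{theo_main}: the integral of the positive part of the constraint over the trajectory of agent $i$ is bounded by $\big\|[\ccalF_{Ti}^{i}]^+\big\|$, which grows as $O(\sqrt{T})$ by the multiplier argument of that proof, and the transfer from agent $i$ to a neighbor $j$ introduces an additive $L_f \int_0^T \|\bbx_i(t)-\bbx_j(t)\|\,dt$ term that is $O(\sqrt{T})$ by Proposition~\ref{proposition_disagreement} together with the triangle inequality across the network diameter $D$. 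I expect the main obstacle to be stating the pointwise saturation inequality cleanly enough that it directly feeds the positive-part bound from Theorem~\ref{theo_main}; once $\tilde{f}_{\delta,i,k} \le [f_{i,k}]^+$ is in hand, the rest is a verbatim reuse of the Lipschitz-plus-disagreement estimate already established, so the conclusion $\big(\tilde{\ccalF}_{\delta,Tj}^{i}\big)_k \leq O(\sqrt{T})$ follows immediately.
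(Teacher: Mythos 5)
There is a genuine gap at the crucial step. Your pointwise bound $\tilde{f}_{\delta,i,k}(t,\bbx) \leq [f_{i,k}(t,\bbx)]^+$ is correct, but it reduces the corollary to bounding $\int_0^T [f_{i,k}(t,\bbx_i(t))]^+\,dt$, and this quantity is \emph{not} controlled by Theorem \ref{theo_main}. What that theorem's multiplier argument bounds (via the choice $\tilde{\bm{\lambda}}_i = [\ccalF_{Ti}^{i}]^+$) is $\bigl\|\bigl[\int_0^T f_{i}(t,\bbx_i(t))\,dt\bigr]^+\bigr\|$, i.e.\ the positive part \emph{of the integral}; you need the integral \emph{of the positive part}, and Jensen's inequality $\bigl[\int_0^T f\,dt\bigr]^+ \leq \int_0^T [f]^+\,dt$ goes the wrong way for your purposes. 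The gap is not cosmetic: $\int_0^T [f]^+\,dt$ is exactly the $\delta=0$ saturated fit, which is the case the paper explicitly declines to prove (``ideally one would like to set $\delta=0$''), precisely because the ordinary fit bound permits cancellation between periods of violation and periods of slack. Note also that your argument never uses $\delta>0$ anywhere — a strong sign that it proves too much.

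The paper's route is different and avoids this entirely: it observes that $\tilde{f}_{\delta,i}(t,\cdot)=\max\{f_i(t,\cdot),-\delta\}$ is itself convex (pointwise max of convex functions), Lipschitz with the same constant, and still strictly feasible at $\bbx^\dagger$ because $\delta>0$ and $f_i(t,\bbx^\dagger)\prec 0$ (this is where $\delta>0$ is indispensable — Assumption \ref{assumption_viability} would fail for $[f_i]^+$). Hence the entire argument of Theorem \ref{theo_main} can be re-run with $f_i$ replaced by $\tilde{f}_{\delta,i}$ throughout, in particular choosing the multiplier $\tilde{\bm{\lambda}}_i = [\tilde{\ccalF}_{\delta,Ti}^{i}]^+$ so that the saturated fit appears directly as $\|[\tilde{\ccalF}_{\delta,Ti}^{i}]^+\|^2$ on the left-hand side of the key inequality; no comparison between ``positive part of the integral'' and ``integral of the positive part'' is ever needed. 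To repair your proof, replace the domination-by-$[f]^+$ step with this verification of the hypotheses for the saturated constraints (your Lipschitz-plus-disagreement transfer from agent $i$ to agent $j$ then carries over unchanged, since the pointwise max preserves the Lipschitz constant $L_f$).
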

\begin{proof}
Recall that $\tilde{f}_{\delta,i}(t,\bbx) = \max \left\{f_i(t,\bbx),-\delta \right\}$. Because $\tilde{f}_{\delta,i}(t,\bbx)$ is the point-wise maximum of two convex functions in $\bbx$ it is also convex. Thus, it satisfies the hypotheses of Theorem \ref{theo_main} and it follows that the local saturated fit is also bounded by a function of $O(\sqrt{T})$.
\end{proof}
In the next section we present numerical examples that support the theoretical conclusions.Before doing so, we discuss the bound on the network disagreement established in Proposition \ref{proposition_disagreement}.

\begin{remark}
  In Proposition \ref{proposition_disagreement} we showed that the network disagreement depends on $T$ as $O(\sqrt{T})$. However, the bound achieved is a direct consequence of the choice of the relaxation of the consensus constraint. This choice being arbitrary, it is possible to chose different relaxations to obtain different bounds on the network disagreement. If one desires to bound this quantity by a sublinear function $h(T)$, it suffices to impose the constraint $g(\left\|\bbx_i-\bbx_j\right\|)-\gamma<0$ for any $g(y)=h^{-1}(y)$ as long as $g(y)$ is a convex function. The latter holds because the main component of the proof of the proposition is Jensen's inequality.  
  \end{remark}

\section{Numerical Examples}\label{sec_numerical}
%
  %
In this section we consider a team of $N$ robots tasked with classifying in real time and in a distributed manner the different objects and terrains that compose the environment in which they are deployed. This problem, has been studied in \cite{koppel2015d4l}, although the method presented here is different. Each robot has only access to information about the environment based on the path it has traversed and the images gathered. Therefore, its local information may not be enough to achieve the task of classification since the information gathered may omit regions of the feature space that are crucial. See for instance Figure \ref{fig_road_small} where we depict random trajectories of twenty agents driving around an intersection. When the agent is on the pavement i.e., the absolute value of its horizontal or vertical coordinate is less than five, then it observes pavement images. On the other hand, outside this region it observes grass. As it can be observed in that figure only some of the agents visit both regions and the interest is that the whole team can learn a common classifier. The advantage of learning such classifier is that a robot can identify if it is on grass even if it has not seen grass in the training process. In particular we consider a problem in which each robot receives features $\bbz_i(t)\in \mathbb{R}^n$ from the scene and corresponding labels $y_i(t)\in \left\{-1,1\right\}$ depending on whether the terrain is grass or pavement. The details of the feature extraction from image data is provided in Section \ref{sec_real}. The common objective of the agents can be formulated as training a common linear classifier $\bbx\in\mathbb{R}^n$ that minimizes a loss function. The loss function is such that  its value is small when the classification is accurate and it takes large values in the opposite case. In particular for this problem we consider logistic regression
\begin{equation}\label{eqn_constraint}
 f_i(t,\bbx) = \log\left(1+e^{-y_i(t)\bbx^\top\bbz_i(t)}\right). 
  \end{equation}
The classifier is designed so that the prediction is defined by the sign of the inner product between the classifier $\bbx$ and the feature vector $\bbz_i(t)$ observed by robot $i$ at time $t$.  This is, the predicted label is given by $\hat{y}_i(t) = \sign(\bbx^\top\bbz_i(t))$. Notice that if the prediction is correct, both $\hat{y}_i(t)$ and $y_i(t)$ have the same sign and thus, the exponential in \eqref{eqn_constraint} takes a small value. Which in turn results in $f_i(t,\bbx)$ being small. On the other hand, if the classification is incorrect, the sign of the exponential is positive, which results in a large value of $f_i(t,\bbx)$. Hence, the expression in \eqref{eqn_constraint} is a surrogate of the error function since it results in small values when the prediction is correct and on large values on the other hand.  Notice that agents need to exchange their current actions $\bbx(t)$ with their neighbors to solve the minimization of \eqref{eqn_constraint} using the algorithm defined by \eqref{eqn_gradient_descent} and \eqref{eqn_gradient_ascent}. Since the dimensionality of the actions is as large as the feature vector we want to find a sparse classifier in order to reduce the communication cost. A way of doing so is to include a $\ell_1$ norm regularization in the cost. This regularizer is known to promote sparsity. Let, $\alpha>0$ and define the following local cost 
\begin{figure}
    \centering
    \includegraphics[width=\linewidth,height=0.62\linewidth]{./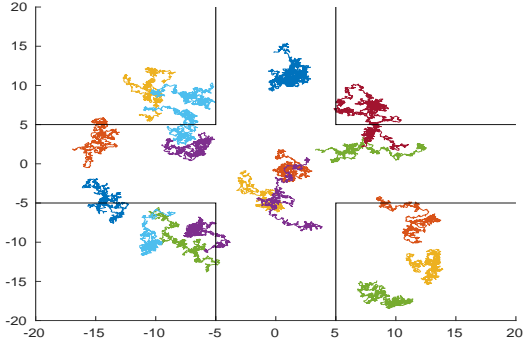}
    \caption{Example of $20$ robots driving randomly at an intersection. When the robot is on the street it is observing pavement images, whereas when it outside of the intersection it has acces to grass images.  }
        \label{fig_road_small}
    \end{figure}
    ~ 
%
%
\begin{equation}
\tilde{f}_i(t,\bbx) = f_i(t,\bbx) + \alpha \left\|\bbx\right\|_1.
  \end{equation}
  The previous objective introduces a tradeoff between classification performance and sparsity. Instead, one can define a desired tolerance for classification error -- by imposing that $f_i(t,\bbx)$ is smaller than a given tolerance $\delta>0$ for all $i=1\ldots N$-- and by minimizing the objective $\|\bbx\|_1$, so to get the sparsest of the solutions. With this idea we define the following centralized problem 
\begin{equation}
\begin{split}
&  \min_{\bbx} \quad\|\bbx\|_1 \\ 
&\,\mbox{s.t.}\quad\, f_i(t,\bbx)-\delta<0 \quad \forall i=1\ldots N.
  \end{split}
  \end{equation}
To solve this problem in a distributed manner, we define -- as done in Section \ref{sec_problem_formulation} -- local copies of the classifier $\bbx_i$ for each agent. The decentralized version of the previous problem then yields 
\begin{equation}\label{eqn_problem_simulations}
\begin{split}
&  \min_{\bbx_1,\bbx_2, \ldots \bbx_N} \,\frac{1}{N}\sum_{i=1}^N\|\bbx_i\|_1 \\ 
  &\quad\quad \mbox{s.t.}\quad f_i(t,\bbx_i)-\delta<0 \quad \forall i=1\ldots N\\
  & \quad \quad \quad \quad \left\|\bbx_i - \bbx_j\right\|^2-\gamma \quad \forall i=1\ldots N.
  \end{split}
  \end{equation}
We evaluate the performance of the saddle-point algorithm \eqref{eqn_gradient_descent}--\eqref{eqn_gradient_ascent} by solving the problem \eqref{eqn_problem_simulations} applied to the team of robots navigating around the intersection depicted in Figure \ref{fig_road_small}. The positions of the $N$ agents is initialized by drawing it from a uniform distribution on the square $[-L,L]^2$ and their paths are random walks updated every $T_s$ seconds, where each step is drawn from a two-dimensional Gaussian variable, with zero mean and covariance matrix $\diag(\sigma_w, \sigma_w)$. Ever $T_s$ seconds each agent has observed $I$ images in the IRA\footnote{Integrated Research Assessment for the U.S. ArmyÕs
Robotics Collaborative Technology Alliance} database \cite{koppel2015d4l} of either grass or pavement. Do notice that even though the algorithm proposed is derived in continuous time, for this application we propose to work with a discrete time system. In Section \ref{sec_results} we present the results achieved by the saddle-point algorithm in the previously described problem. Before doing so, we describe in the next section the feature extraction from the images.
%
\subsection{Data from image database}\label{sec_real}
  The feature extraction is done as in \cite{koppel2015d4l}, a procedure inspired in the two-dimensional texton \cite{leung2001representing}. We describe it next for completeness. The texture features $\bbz_i(t)$ are generated as the sum of a sparse dictionary representation of subpatches of size 24-by-24. This is, each robot classifies images patches of size 24-by-24 by first extracting the nine non-overlapping 8-by-8 sub-patches within it. Each sub-patch is then vectorized, the sample mean subtracted off and divided by its norm. Such that the resulting sub-patch $j$ observed by agent $i$, yields a zero-mean vector $\bbz_i^j$ with norm one. The $9$ vectors resulting from each sub-patch are stacked as columns in a matrix $\bbZ_i = [\bbz_i^1; \ldots; \bbz_i^9]$. On the other hand, the agents have a dictionary of textures that has been trained offline following \cite{koppel2015d4l}. An example of this dictionary can be observed in Figure \ref{fig_dictionary}. The dictionary can be represented by a matrix $\bbD\in\ccalM^{n\times 64}$, where $n$ is the number of features that one wants to extract. The feature used for classification by agent $i$ is the aggregate sparse coding $\bbz_i(t)$, defined as $\bbz_i(t) = \sum_{j=1}^9\bm{\bbz}^*(\bbD;\bbz_i^j(t))$, where $\bbz^*(\bbD;\bbz_i^j(t))$ is the solution to the following optimization problem.
\begin{equation}
\bm{\bbz}^*(\bbD;\bbz_i^j(t)) = \argmin_{\bbz} \frac{1}{2} \| \bbz_i^j(t) - \bbD \bbz\|_2^2 +\zeta\|\bbz\|_1,
  \end{equation}
where $\zeta>0$ the coefficient of the regularization.
%
\subsection{Results}\label{sec_results}
In this section we present the bahavior of the Online Distributed Online Algorithm \eqref{eqn_gradient_descent}--\eqref{eqn_gradient_ascent} for a team of robots that drive in the intersection as the one depicted in Figure \ref{fig_road_small}. For this particular example we consider $N=20$ agents, $L= 15$, $\sigma_w$ and $T_s=1 $ . The parameters of the feature extraction are set to $\zeta=0.125$, $n=128$. We chose $\delta = 0.001$, $\gamma=10$ and the algorithm step size to be $\eta=0.02$, and we consider that each agent has acces to $24$ images per sampling period, in this case, $24$ images per second. 

As it can be observed in Figure \ref{fig_real_disagreement} the network disagreement converges to zero in approximately $6$ seconds, which implies consensus among the agents. This observation supports the theoretical result in Proposition  \ref{proposition_disagreement}. In Figure \ref{fig_real_fit} we depict the network fit for one randomly selected node. As predicted by Theorem \ref{theo_main} the fit is sublinear. The effectiveness of the algorithm can be observed in the classification accuracy achieved by the agents in Figure \ref{fig_real_accuracy}. Notice that the classification error of all the agents is bellow $30\%$. It can be observed as well, that some agents classify with accuracy above $90\%$. The latter is the case for agents that are observing grass. There seems to be an intrinsic difficulty in classifying pavement in the current data set. To support this claim we compute the covariance matrix of $512$ features of images selected randomly. We then project the $192$-dimensional feature vector onto the first two principal components. This projection is depicted in Figure \ref{fig_classes}. As it can be observed the points corresponding to pavement cannot be separated from points corresponding to grass, yet there is a cluster of half of the grass points that is away from the points containing pavement. This suggests that it is indeed harder to classify the pavement images. 

\begin{figure}
  \centering
        \includegraphics[width=0.8\linewidth]{./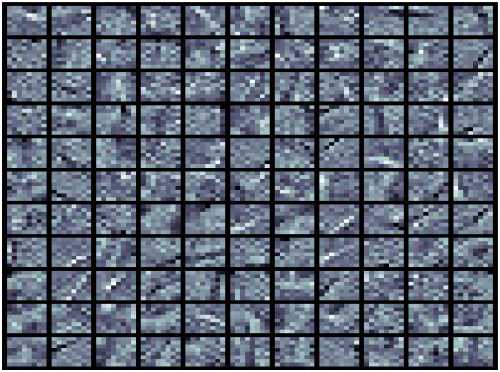}
        \caption{Example of dictionary for 8-by-8 gray scale patches.}
   \label{fig_dictionary}
    \end{figure}
%
\begin{figure}
    \centering
        \includegraphics[width=\linewidth]{./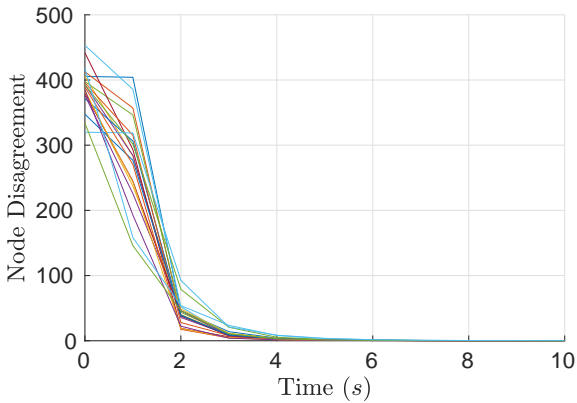}
        \caption{Network disagreement per node for a network of $20$ agents that follow the dynamics \eqref{eqn_gradient_descent}--\eqref{eqn_gradient_ascent}. The feature vectors $\bbz_i(t)\in\mathbb{R}^n$ are extracted from images of the IRA texture database as described in section \ref{sec_real}. The disagreement is sublinear as expected by virtue of Proposition \ref{proposition_disagreement}.}
        \label{fig_real_disagreement}
    \end{figure}
    ~ 
\begin{figure}
\centering
        \includegraphics[width=\linewidth]{./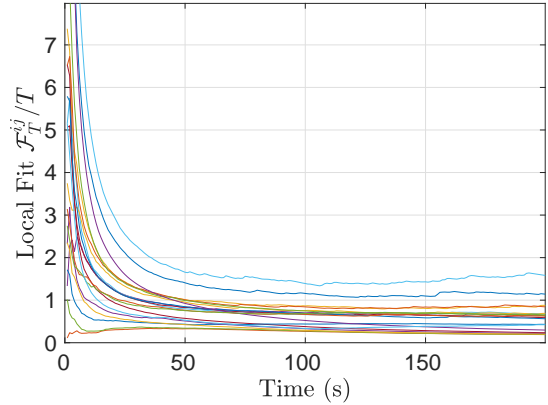}
        \caption{Local fit $\ccalF_T^{j}/T$ for a random node in a network of $N=20$ agents that follow the dynamics \eqref{eqn_gradient_descent}--\eqref{eqn_gradient_ascent}. The feature vectors $\bbz_i(t)\in\mathbb{R}^n$ are extracted from images of the IRA texture database as described in Section \ref{sec_real}. As predicted by Theorem \ref{theo_main} the fit is sublinear.  }
   \label{fig_real_fit}
    \end{figure}
\begin{figure}
  \centering
        \includegraphics[width=\linewidth]{./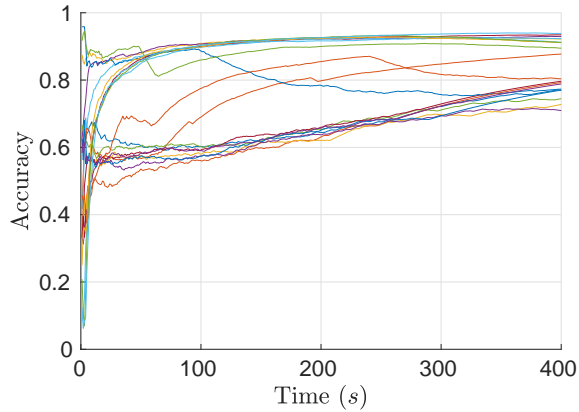}
        \caption{The accuracy of the prediction per node reaches a minimum of $70\%$ for a network of $N=20$ agents that follow the dynamics \eqref{eqn_gradient_descent}--\eqref{eqn_gradient_ascent}. The feature vectors $\bbz_i(t)\in\mathbb{R}^n$ are extracted from images of the IRA texture database as described in Section \ref{sec_real}. As it can be observed some agents achieve accuracy of $90\%$. This agents are observing grass images, whereas those that perform worst are classifying pavement. The fact that one of the classes is poorly classified can be understood as not having a cluster of points where there is no grass as it can be seen in the study of the two principal components of the data set in Figure \ref{fig_classes}.  }
        \label{fig_real_accuracy}
        \end{figure}

        \begin{figure}
  \centering
        \includegraphics[width=\linewidth]{./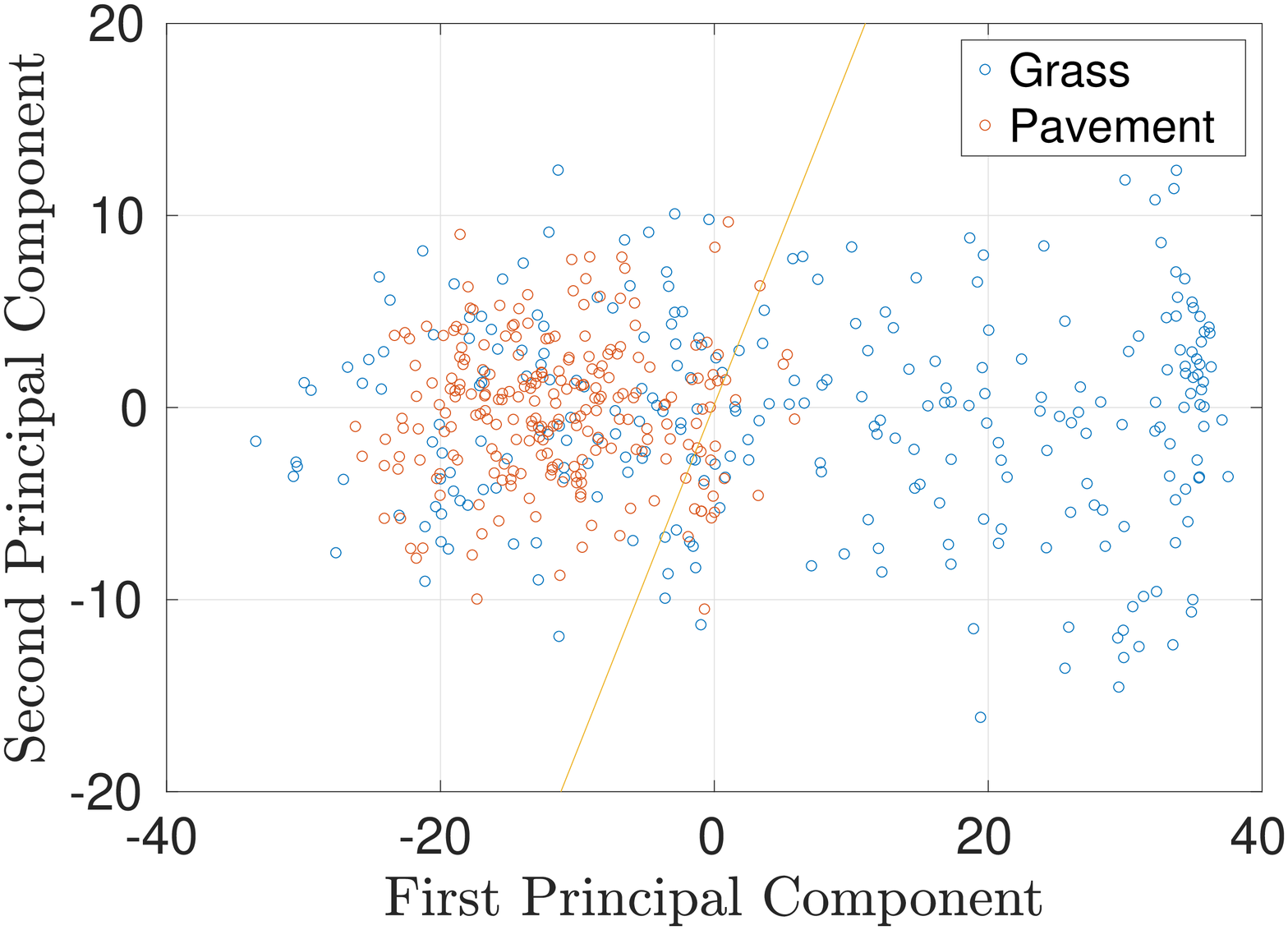}
        \caption{We depict the projection of the features extracted from $512$ images onto the two principal components of the data set. As it can be observed in this picture the images of grass are easier to classify since they present a distinct cluster without any pavement images. On the other hand, the cluster of points corresponding to pavement are intertwined with grass images, which makes its classification harder. We depict as well the projection of the classifier trained by node $1$ after $400$ seconds.  }
        \label{fig_classes}
        \end{figure}

%
    ~ 



\section{Conclusion}\label{sec_conclusion}
We considered the problem of constrained distributed online learning. Each agent only has access to its local constraints and objective function, and the aim is to coordinate the actions among the agents such that the resulting trajectories are feasible and optimal for the team as a whole. We showed that a distributed online version of the saddle point algorithm achieves global fit, regret and network disagreement bounded by functions whose growth rate is bounded by $\sqrt{T}$. The latter result suggests vanishing constraint violation, optimality and network agreement in average as time evolves. We evaluate the performance of the algorithm for a team of robots driving through an urban environment to perform real time texture classification for the purpose of terrain recognition.

\appendix
\section{Appendices}
\subsection{Proof of Lemma \ref{lemma_key_lemma}}\label{ap_key_lemma}
  Let us start by considering the time derivative of the energy function defined in \eqref{eqn_energy_function}. Using the chain rule yields 
  \begin{equation}
    \begin{split}
      \dot{V}_{\tilde{\bbx},\tilde{\bm{\lambda}},\tilde{\bm{\mu}}}(\bbx(t),\bm{\lambda}(t),\bm{\mu}(t)) = \sum_{i=1}^N \left(\bbx_i(t)-\tilde{\bbx}_i\right)^\top\dot{\bbx_i}(t) \\
           + \sum_{i=1}^N\left(\bm{\lambda}_i(t)-\tilde{\bm{\lambda}}_i\right)^\top\dot{\bm{\lambda}}_i(t) +\sum_{i=1}^N\left(\bm{\mu}_i(t)-\tilde{\bm{\mu}}_i\right)^\top\dot{\bm{\mu}}_i(t). 
    \end{split}
    \end{equation}
Substituting the time derivatives by those given by the Distributed Online Saddle Point dynamics \eqref{eqn_gradient_descent}-\eqref{eqn_gradient_ascent} in the previous expression yields
\begin{equation}\label{eqn_v_dot0}
  \begin{split}
    &\dot{V}_{\tilde{\bbx},\tilde{\bm{\lambda}},\tilde{\bm{\mu}}}(\bbx(t),\bm{\lambda}(t),\bm{\mu}(t)) \\
    &= \sum_{i=1}^N \left(\bbx_i(t)-\tilde{\bbx}_i\right)^\top\Pi_{\ccalX}\left[\bbx_i(t),-\varepsilon\ccalL_{x_i}(t,\bbx(t),\bm{\lambda}(t),\bm{\mu}(t))\right]\\
&+ \sum_{i=1}^N\left(\bm{\lambda}_i(t)-\tilde{\bm{\lambda}}_i\right)^\top\Pi_+\left[\bm{\lambda}_i(t),\varepsilon\ccalL_{\lambda_i}(t,\bbx_i(t),\bm{\lambda}_i(t),\bm{\mu}_i(t))\right]\\
&+\sum_{i=1}^N\left(\bm{\mu}_i(t)-\tilde{\bm{\mu}}_i\right)^\top\Pi_{+}\left[\bm{\mu}(t),\varepsilon\ccalL_\mu^i(t,\bbx_i(t),\bm{\lambda}_i(t),\bm{\mu}_i(t))\right]. 
    \end{split}
    \end{equation}
Since both $\bbx_i(t)$ and $\tilde{\bbx}_i$ belong to the convex set $\ccalX$, the inner product between $\bbx_i(t)-\tilde{\bbx}_i$ and the projected vector can be upper bounded by the the product with the field without the projection (cf., Lemma 1 \cite{PaternainRibeiro16})
\begin{equation}
  \begin{split}
    \left(\bbx_i(t)-\tilde{\bbx}_i\right)^\top\Pi_{\ccalX}\left[\bbx_i(t),-\varepsilon\ccalL_{x_i}(t,\bbx(t),\bm{\lambda}(t),\bm{\mu}(t))\right]\\
    \leq -\left(\bbx_i(t)-\tilde{\bbx}_i\right)^\top\varepsilon\ccalL_{x_i}(t,\bbx(t),\bm{\lambda}(t),\bm{\mu}(t)).
\end{split}
  \end{equation}
Summing across agents on both sides of the previous expression allows us to upper bound the first term summation in \eqref{eqn_v_dot0}
\begin{equation}\label{eqn_bound_x}
  \begin{split}
    \sum_{i=1}^N\left(\bbx_i(t)-\tilde{\bbx}_i\right)^\top\Pi_{\ccalX}\left[\bbx_i(t),-\varepsilon\ccalL_{x_i}(t,\bbx(t),\bm{\lambda}(t),\bm{\mu}(t))\right]\\
        \leq -\varepsilon     \sum_{i=1}^N \left(\bbx(t)-\tilde{\bbx}\right)^\top\ccalL_{x}(t,\bbx(t),\bm{\lambda}(t),\bm{\mu}(t)).
\end{split}
\end{equation}
In addition, since the Lagrangian is a convex function with respect to $\bbx_i(t)$ we can further upper bound the sum of inner products by the difference between the Lagrangian evaluated at $\tilde{\bbx}_i$ and $\bbx_i(t)$. Proceeding in this way for all $i\in\ccalV$ yields
\begin{equation}\label{eqn_bound_x}
  \begin{split}
    \sum_{i=1}^N\left(\bbx_i(t)-\tilde{\bbx}_i\right)^\top\Pi_{\ccalX}\left[\bbx_i(t),-\varepsilon\ccalL_{x_i}(t,\bbx(t),\bm{\lambda}(t),\bm{\mu}(t))\right]\\
    \leq \varepsilon\left(\ccalL(t,\tilde{\bbx},\bm{\lambda}(t),\bm{\mu}(t))-\ccalL(t,\bbx(t),\bm{\lambda}(t),\bm{\mu}(t))\right).
\end{split}
  \end{equation}
Likewise, for the multipliers the following relationships hold by virtue of \cite[Lemma 1]{PaternainRibeiro16}
\begin{equation}
  \begin{split}
   \left(\bm{\lambda}_i(t)-\tilde{\bm{\lambda}}_i\right)^\top\Pi_+\left[\bm{\lambda}_i(t),\varepsilon\ccalL_{\lambda_i}(t,\bbx(t),\bm{\lambda}(t),\bm{\mu}(t))\right]\\
    \leq \left(\bm{\lambda}_i(t)-\tilde{\bm{\lambda}}_i\right)^\top\varepsilon\ccalL_{\lambda_i}(t,\bbx(t),\bm{\lambda}(t),\bm{\mu}(t)),
\end{split}
  \end{equation}
and
\begin{equation}
  \begin{split}
   \left(\bm{\mu}_i(t)-\tilde{\bm{\mu}}_i\right)^\top\Pi_+\left[\bm{\mu}_i(t),\varepsilon\ccalL_{\mu_i}(t,\bbx(t),\bm{\lambda}(t),\bm{\mu}(t))\right]\\
    \leq \left(\bm{\mu}_i(t)-\tilde{\bm{\mu}}_i\right)^\top\varepsilon\ccalL_{\mu_i}(t,\bbx(t),\bm{\lambda}(t),\bm{\mu}(t)).
\end{split}
  \end{equation}
Because the Lagrangian is linear with respect to $\bm{\lambda}$ and $\bm{\mu}$ (cf., \eqref{eqn_lagrangian}) we can write the above inner products as differences of the Lagrangian evaluated at $\bm{\lambda}_i(t)$ and $\bar{\bm{\lambda}}_i$ and as differences of the Lagrangian evaluated at $\bm{\mu}_i(t)$ and $\bar{\bm{\mu}}_i$
\begin{equation}\label{eqn_bound_multipliers}
  \begin{split}
\sum_{i=1}^N \left(\bm{\lambda}_i(t)-\tilde{\bm{\lambda}}_i\right)^\top\Pi_+\left[\bm{\lambda}_i(t),\varepsilon\ccalL_{\lambda_i}(t,\bbx(t),\bm{\lambda}(t),\bm{\mu}(t))\right]\\
    +\sum_{i=1}^N \left(\bm{\mu}_i(t)-\tilde{\bm{\mu}}_i\right)^\top\left[\bbmu_i(t),\varepsilon\ccalL_{\mu_i}(t,\bbx(t),\bm{\lambda}(t),\bm{\mu}(t))\right] \\
    \leq \varepsilon\left(\ccalL(t,\bbx(t),\bm{\lambda}(t),\bm{\mu}(t))-\ccalL(t,\bbx(t),\tilde{\bm{\lambda}},\tilde{\bm{\mu}})\right).
\end{split}
\end{equation}
Substituting the expressions \eqref{eqn_bound_x} and \eqref{eqn_bound_multipliers}  in \eqref{eqn_v_dot0} reduces to 
\begin{equation}\label{eqn_v_dot}
  \begin{split}
    \dot{V}_{\tilde{\bbx},\tilde{\bm{\lambda}},\tilde{\bm{\mu}}}(\bbx(t),\bm{\lambda}(t),\bm{\mu}(t)) \\
    \leq \varepsilon\left(\ccalL(t,\tilde{\bbx},\bm{\lambda}(t),\bm{\mu}(t))-\ccalL(t,\bbx(t),\bm{\lambda}(t),\bm{\mu}(t))\right) \\
    +\varepsilon\left(\ccalL(t,\bbx(t),\bm{\lambda}(t),\bm{\mu}(t))-\ccalL(t,\bbx(t),\tilde{\bm{\lambda}},\tilde{\bm{\mu}})\right). \\
  \end{split}
    \end{equation}
Observe that the second and third terms on the right hand side of the previous expression cancel out for all $t$. Hence, the previous bound reduces to
\begin{equation}
\begin{split}
    \dot{V}_{\tilde{\bbx},\tilde{\bm{\lambda}},\tilde{\bm{\mu}}}(\bbx(t),\bm{\lambda}(t),\bm{\mu}(t)) \leq \\\varepsilon\left(\ccalL(t,\tilde{\bbx},\bm{\lambda}(t),\bm{\mu}(t))-\ccalL(t,\bbx(t),\tilde{\bm{\lambda}},\tilde{\bm{\mu}})\right).
    \end{split}
\end{equation}
 Rearranging the terms in the previous inequality and integrating both sides from $t=0$ until the time horizon $t=T$ yields
\begin{equation}
  \begin{split}
    \int_0^T\ccalL(t,\bbx(t),\tilde{\bm{\lambda}},\tilde{\bm{\mu}})-\ccalL(t,\tilde{\bbx},\bm{\lambda}(t),\bm{\mu}(t))\, dt\\
    \leq  -\frac{1}{\varepsilon}\int_0^T\dot{V}_{\tilde{\bbx},\tilde{\bm{\lambda}},\tilde{\bm{\mu}}}(\bbx(t),\bm{\lambda}(t),\bm{\mu}(t))\, dt.
  \end{split}
    \end{equation}
By virtue of the Fundamental Theorem of Calculus, the right hand side of the previous expression reduces to the difference between ${V}_{\tilde{\bbx},\tilde{\bm{\lambda}},\tilde{\bm{\mu}}}(\bbx(t),\bm{\lambda}(t),\bm{\mu}(t))$ evaluated at time $T$ and $0$.
%
The proof if completed by observing that for any $T$, ${V}_{\tilde{\bbx},\tilde{\bm{\lambda}},\tilde{\bm{\mu}}}(\bbx(T),\bm{\lambda}(T),\bm{\mu}(T))$ is non-negative and thus, the right hand side can be upper bounded by the energy function evaluated at $t=0$.


\bibliographystyle{ieeetr}
\bibliography{bib}

\end{document}